\newtheorem{Theorem}{Theorem}[section]
\newtheorem{Lemma}[Theorem]{Lemma}
\newtheorem{Proposition}[Theorem]{Proposition}
\newtheorem{Definition}[Theorem]{Definition}
\newtheorem{Corollary}[Theorem]{Corollary}
\newtheorem{Remark}[Theorem]{Remark}
\newtheorem{Proof of Theorem 3.1}[Theorem]{Proof of Theorem 3.1}
\newcommand{\s}{\underline{s}}
\newcommand{\Di}{\Delta_{\mathrm{inv}}}
\title[Boolean Involutions]{Boolean Complexes of Involutions}
\author{Axel Hultman \and Vincent Umutabazi}
\address{Department of Mathematics, Link\"oping University, SE-581 83 Link\"oping, Sweden}
\email{axel.hultman@liu.se}
\email{vincent.umutabazi@liu.se}
\begin{document}

\begin{abstract}
Let $(W,S)$ be a Coxeter system. We introduce  the boolean 
 complex of involutions of $W$  which is an analogue of the boolean complex of 
 $W$ studied by Ragnarsson and Tenner.
 By applying discrete Morse theory, we determine the homotopy type of the boolean 
 complex of involutions for a large class of $(W,S)$, including all finite Coxeter groups, 
 finding that the homotopy type is that of a wedge of spheres of dimension $\vert S\vert-1$.
 In addition, we find simple recurrence formulas for the number of spheres in the wedge.

\end{abstract}
\maketitle

\section{Introduction} \label{sec:Introduction}

Let $F$ be a finite alphabet. A word over $F$ is {\em injective} if it is a sequence of distinct letters from $F$. The poset of all injective words over $F$ ordered by the (not necessarily consecutive) subword relation is a simplicial poset, hence the face poset of a boolean cell complex. This is the well-studied {\em complex of injective words}. Farmer \cite{MR529895} showed that it is homotopy equivalent to a wedge of spheres of dimension $|F|-1$. Jonsson and Welker \cite{MR2552261} proved the same conclusion for considerably more general classes of boolean cell complexes using shellability techniques.

Ragnarsson and Tenner \cite{MR2538015} independently arrived at one of Jonsson and Welker's families of complexes from a different starting point. Namely, Ragnarsson and Tenner considered the {\em boolean elements} of a Coxeter system $(W,S)$, i.e.\ those elements whose Bruhat order ideals are boolean lattices. These elements were first considered by Tenner in \cite{MR2333139}. Under Bruhat order, they generate the face poset of a boolean cell complex, the {\em boolean complex of $(W,S)$}. Using discrete Morse theory, it was shown in \cite{MR2538015} that this complex is homotopy equivalent to a wedge of spheres of dimension $|S|-1$, and a recurrence formula for the number of spheres was found.

In \cite{MR2519850}, the first named author and Vorwerk studied the {\em boolean involutions} of $(W,S)$ as an involution analogue of Tenner's boolean elements. In the Bruhat order on involutions, they too form the face poset of a boolean cell complex. In this paper, we study this {\em boolean complex of involutions}, $\Delta_{\mathrm{inv}}(W)$. Such complexes are not special cases of Jonsson and Welker's constructions.  

For a large class of Coxeter systems, including all finite ones, we apply discrete Morse theory in order to show that $\Delta_{\mathrm{inv}}(W)$ is also homotopy equivalent to a wedge of spheres of dimension $|S|-1$ and provide a recurrence formula for the number of spheres.

The remainder of this paper is organized in the following way. Section \ref{sec:prel} contains preliminaries on cell complexes, acyclic matchings, and Coxeter group theory. Section \ref{sec:main} contains the main technical result of this paper, Theorem \ref{th:mai2}. In Section \ref{sec:mainn}, we provide applications of the main result by computing the homotopy type of $\Delta_{\mathrm{inv}}(W)$ for many Coxeter groups, including all finite types.

\section{Preliminaries}\label{sec:prel}
In this section, notions on simplicial posets, acyclic matchings and  cell complexes
are collected for being used in the sequel.
\begin{center}
	
\end{center}
\subsection{Posets, matchings, and cell complexes}\label{sec:comp}
Let $P$ be a poset and $x,y\in P$ such that $x<y$. We say that 
$y$  \textit{covers} $x$, written as $x\lessdot y$, if there is 
no $z\in P$ such that $x<z<y$.
\begin{Definition}
Let $P$ be a poset with cover relation $\lessdot$. An involution 
$ M:P\rightarrow P $ (i.e., $M\circ M=id$) such that for each 
$ x\in P $, $x\lessdot M(x)$ or $M(x)\lessdot x$ or $M(x)=x$ is 
called a \emph{matching}. An element $x\in P$ such that $M(x)=x$ 
is called \emph{critical}. 
\end{Definition}
\begin{Definition}\label{Def2}
Let $P$ be a  poset and $M$ be a matching on $P$. 
Consider the Hasse diagram of $P$ as a directed graph with edges
$x\leftarrow y$ if $x\lessdot y$ for $x,y\in P$. From the Hasse 
diagram of $P$ construct a new directed graph $G_{M}(P)$ by 
reversing each arrow $x\leftarrow y $ to $x\rightarrow y$ if 
$y=M(x)$. The matching $M$ on $P$ is called \emph{acyclic} if 
there are no directed cycles in  $G_{M}(P)$.
\end{Definition}	
In $G_{M}(P)$, the unchanged edge $x\leftarrow y $  is said to 
be \emph{downward} while 
the edge $x\rightarrow M(x)$ is said to be \emph{upward}.
Since for any matching $M$, no  two incident edges are directed upward 
 in $G_{M}(P)$, the next lemma follows:	
\begin{Lemma}\label{LL1}
If a matching $M$ on a poset $P$ is not acyclic, then any directed cycle 
in $G_{M}(P)$ is of the form 
 $$ a_{1}\lessdot M(a_{1}) \gtrdot a_{2} \lessdot 
M(a_{2}) \ldots 
\gtrdot  a_{k} \lessdot M(a_{k} )\gtrdot  a_{1}, $$ 
 for pairwise distinct $ a_{i}$ and some $k\geq 2$.
\end{Lemma}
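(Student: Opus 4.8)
The plan is to traverse an arbitrary directed cycle $C$ of $G_{M}(P)$ and to show that its edges strictly alternate between the two types permitted by the construction: \emph{upward} edges $x\rightarrow M(x)$ arising from matched cover relations $x\lessdot M(x)$, and \emph{downward} edges $y\rightarrow x$ arising from unmatched cover relations $x\lessdot y$. First I would record the one fact handed to us for free: by the observation preceding the statement, no two edges of $G_{M}(P)$ incident to a common vertex point upward, so in particular $C$ contains no two consecutive upward edges. Reading $C$ as a cyclic sequence of edges, this already forces every upward edge to be both preceded and followed by a downward edge.

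The substance of the argument is to exclude the opposite defect, namely two consecutive \emph{downward} edges; note that the free observation has no downward analogue, since one vertex may well be the source of several downward edges, so this does not come for nothing. To control it I would use that the posets relevant here are ranked (they are face posets of cell complexes), so that each upward edge raises the rank by one and each downward edge lowers it by one. Summing the rank changes around the closed walk $C$ gives zero, whence $C$ has equally many upward and downward edges, say $k$ of each. Since the $k$ upward edges are pairwise non-adjacent, they cut the cyclic sequence into $k$ arcs, each of which must contain at least one downward edge; as there are exactly $k$ downward edges in total, each arc contains precisely one. Hence the edges alternate upward, downward, upward, downward, and this counting step is the crux of the proof, the place where the bare ``no two incident upward edges'' has to be upgraded to genuine alternation.

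Finally I would read off the asserted shape. Labelling the sources of the upward edges by $a_{1},\dots,a_{k}$ in cyclic order, alternation gives $a_{i}\lessdot M(a_{i})$ followed by the unique downward edge $M(a_{i})\gtrdot a_{i+1}$ with indices modulo $k$, which is exactly $a_{1}\lessdot M(a_{1})\gtrdot a_{2}\lessdot\cdots\gtrdot a_{k}\lessdot M(a_{k})\gtrdot a_{1}$. The $a_{i}$ are pairwise distinct simply because the vertices of a cycle are. Lastly $k\geq 2$, for if $k=1$ the cycle would read $a_{1}\rightarrow M(a_{1})\rightarrow a_{1}$, forcing the single cover $a_{1}\lessdot M(a_{1})$ to serve at once as a matched (upward) edge and an unmatched (downward) edge, which is impossible.
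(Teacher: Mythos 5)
Your proof is correct in the setting where the paper actually uses the lemma, and it is genuinely different from --- indeed, more careful than --- the paper's own treatment: the paper gives no proof at all, asserting that the lemma ``follows'' from the observation that no two incident edges of $G_{M}(P)$ are directed upward. That observation only forbids consecutive upward edges; as you rightly stress, excluding consecutive \emph{downward} edges is the real content and does not come for free. In fact, for an arbitrary poset the statement is false, so your importing of a rank function is not a convenience but a necessary repair. Take $P=\{1,\dots,6\}$ with cover relations $3\lessdot 2\lessdot 1\lessdot 6$, $3\lessdot 4$, $5\lessdot 4$, $5\lessdot 6$ (one checks these six pairs are exactly the covers of the order they generate), and let $M$ match $3$ with $4$ and $5$ with $6$, fixing $1$ and $2$. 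Then $G_{M}(P)$ contains the directed cycle $1\to 2\to 3\to 4\to 5\to 6\to 1$, whose edges read down, down, up, down, up, down: it is not of the alternating form, and no alternating cycle exists in this example. The poset is of course not graded, which is precisely the loophole your counting argument closes: when every cover has rank difference one, the rank increments around a closed walk sum to zero, giving equally many up and down edges, and your arc-counting upgrade of ``no two incident upward edges'' to strict alternation is exactly right. Your endgame is also sound: the $a_{i}$ are distinct as vertices of a (simple) cycle, and $k\ge 2$ because a single cover cannot be simultaneously matched and unmatched.

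One small refinement you should add. The paper applies the lemma not only to $P(\Delta_{\mathrm{inv}}(W))$, which is graded with all covers of rank difference one, but also to induced subposets such as $P_{1}(W)$, $P_{2}(W)$ and $P_{3}(W)$, where a cover of the subposet may skip ranks, so a downward edge can lower the rank $\rho$ by more than one. Your argument survives with a one-line adjustment: the matched (upward) edges used there still raise $\rho$ by exactly one, while each downward edge lowers it by at least one. Around a cycle the total rise, $k$, equals the total fall, which is at least the number $d$ of downward edges, so $k\ge d$; conversely $k\ge 1$ (a cycle of downward edges alone would strictly decrease $\rho$), and your arc argument gives $d\ge k$. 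Hence $d=k$, every downward edge drops rank by exactly one, and alternation follows as before.
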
	

Let $\Delta$ be a finite regular cell complex.
The \emph{face poset} of $\Delta$, denoted by $P(\Delta)$, is the 
poset of all cells ordered by set inclusion of their 
closures where a minimum 
element (sometimes thought of as the empty cell) is also in $P(\Delta)$.
For more on regular cell complexes, see \cite{MR3822092}.

\begin{Definition}
Let $\Delta$ be a finite regular cell complex and $P(\Delta)$ its face poset.
 Then  $\Delta$ is called a \emph{boolean cell complex} if $P(\Delta)$ 
 is a \emph{simplicial} poset (i.e., a poset with a minimum element in which 
 every principal order ideal is isomorphic to a boolean lattice).
\end{Definition}	

Conversely, any finite simplicial poset is the face poset of a boolean cell complex which is unique (up to cellular isomorphism).

The following is one of the most useful consequences of discrete Morse theory. It will serve as our main topological tool.
\begin{Theorem}\cite[Theorem 6.3 ]{MR1939695}\label{T1}
Let $\Delta$ be a boolean cell complex and let $M$ 
be an acyclic matching on the face poset $P(\Delta)$. 
If there are $c$ critical cells, all of the same dimension $m$,
 then $\Delta$ is homotopy equivalent to 
 a wedge of $c$ spheres of dimension $m$.
\end{Theorem}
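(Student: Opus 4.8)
The plan is to use the fundamental collapsing mechanism of discrete Morse theory to replace $\Delta$ by a far smaller homotopy equivalent CW complex assembled from the critical cells, and then to pin down the homotopy type of that complex using the hypothesis that every critical cell has the same dimension $m$. First I would use acyclicity to turn the matched pairs into a sequence of elementary collapses. Since $M$ is acyclic, the directed graph $G_M(P(\Delta))$ of Definition \ref{Def2} has no directed cycles and hence admits a linear extension; processing the cells in a compatible order allows one to collapse the matched pairs one at a time. Lemma \ref{LL1} is exactly the statement that the only possible obstruction to this is a directed cycle, which is ruled out by acyclicity.

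The key step is to check that each matched pair $x \lessdot M(x)$ really does give an elementary collapse. At the moment this pair is removed, I would verify that $x$ is a \emph{free face}, meaning that $M(x)$ is the unique cell still present that properly contains $x$; the compatible ordering together with acyclicity guarantees that every other coface of $x$ has already been removed. Because $\Delta$ is a regular cell complex, deleting such a free face together with its unique coface is an elementary collapse and therefore preserves homotopy type. Carrying out all of these collapses shows that $\Delta$ is homotopy equivalent to a pointed CW complex $\Delta_M$ with exactly one cell of dimension $d$ for each critical cell of dimension $d$, the basepoint arising from the matched pair that contains the minimum element.

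It then remains to identify $\Delta_M$. The minimum element $\hat 0$ of $P(\Delta)$, the empty cell, has dimension $-1$ and so cannot be one of the critical cells, all of which have dimension $m$; it is therefore matched, necessarily with a vertex, and this pairing supplies the basepoint of $\Delta_M$. Consequently $\Delta_M$ consists of the basepoint together with $c$ cells of dimension $m$. When $m \geq 1$ the entire $(m-1)$-skeleton of $\Delta_M$ is just the basepoint, so each of the $c$ critical $m$-cells is attached along a map $S^{m-1} \to \{\mathrm{pt}\}$, which is null-homotopic; hence $\Delta_M$ is a wedge of $c$ spheres of dimension $m$, and the same conclusion holds trivially when $m = 0$. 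Since $\Delta \simeq \Delta_M$, the theorem follows.

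The main obstacle is the second paragraph: establishing rigorously that an acyclic matching converts into a valid sequence of elementary collapses, independent of the chosen compatible ordering, and that the surviving cells organize into the pointed complex $\Delta_M$ with one cell per critical cell. This is the genuine homotopy-theoretic content of discrete Morse theory; it rests on the regularity of $\Delta$ and on the free-face verification that Lemma \ref{LL1} underwrites. Once this is secured, the passage to a wedge of spheres is a purely formal consequence of the dimension gap between the basepoint and the critical cells.
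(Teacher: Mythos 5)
The paper offers no proof of this statement: it is imported wholesale from the literature (cited as Theorem 6.3 of the discrete Morse theory reference), so your argument must be judged on its own. Your overall outline is the standard one — replace $\Delta$ by a Morse complex with one cell per critical cell, then observe that a CW complex with a single $0$-cell and $c$ cells of dimension $m$ is a wedge of $c$ spheres of dimension $m$ — and your treatment of the minimum element (it has dimension $-1$, hence cannot be critical, hence is matched with a vertex that supplies the basepoint) and of the final wedge identification are both correct.

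The genuine gap is the mechanism in your second paragraph. You claim that processing the matched pairs in an order compatible with a linear extension of $G_{M}(P(\Delta))$ makes each pair $(x, M(x))$ a free pair at the moment it is removed, because ``every other coface of $x$ has already been removed.'' This fails whenever there is a critical cell of positive dimension: critical cells are never removed, so any matched cell $x$ lying in the closure of a critical cell never becomes a free face, under \emph{any} ordering. Concretely, let $\Delta$ be the boundary of a triangle with vertices $v_1,v_2,v_3$ and edges $e_{12},e_{13},e_{23}$, and match $\hat{0}\leftrightarrow v_1$, $v_2\leftrightarrow e_{12}$, $v_3\leftrightarrow e_{13}$; this matching is acyclic with a unique critical cell $e_{23}$ of dimension $1$, so the hypotheses hold with $c=1$, $m=1$. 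But $\Delta\cong S^1$ has no free face whatsoever, so not a single elementary collapse can be performed, and the equivalence $\Delta\simeq S^1$ cannot be realized by collapses. The actual content of the theorem (Forman's argument, or Chari's cofibration version for regular complexes) builds $\Delta$ \emph{up} along the linear extension and shows that adjoining a matched pair leaves the homotopy type unchanged — the enlarged sublevel complex deformation retracts onto the previous one, which is where regularity enters — while adjoining a critical cell amounts, up to homotopy, to attaching a cell of that dimension. You correctly flagged this paragraph as the crux, but the repair is not a more careful compatible ordering: it is replacing the collapse mechanism by the sublevel deformation-retract and cell-attachment argument.
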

\begin{Corollary}
If an acyclic matching $M$ on  $P(\Delta)$ does not have any critical 
cells, then $\Delta$ is contractible. 
\end{Corollary}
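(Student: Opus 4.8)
The plan is to obtain this statement as the degenerate $c=0$ case of Theorem \ref{T1}. First I would observe that the hypothesis of Theorem \ref{T1} asks for $c$ critical cells, all sharing a common dimension $m$. When the acyclic matching $M$ has no critical cells we simply have $c=0$, and the requirement that the (empty) collection of critical cells all lie in a single dimension $m$ is then satisfied vacuously: there is no cell whose dimension could violate it, and the choice of $m$ is immaterial. Hence Theorem \ref{T1} applies directly, with no additional input needed beyond the absence of critical cells.

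Applying Theorem \ref{T1} with $c=0$ then yields that $\Delta$ is homotopy equivalent to a wedge of $0$ spheres. Under the standard convention, the empty wedge is a single point, which is contractible; consequently $\Delta$ itself is contractible. The only point demanding a moment's care is precisely this interpretation of a wedge of zero spheres as a point, together with the vacuous verification of the common-dimension hypothesis, and I expect this to be the sole (very mild) subtlety rather than a genuine obstacle. Indeed, there is no real difficulty here: the corollary is exactly the boundary case of the preceding theorem, and the argument concludes as soon as these conventions are made explicit.
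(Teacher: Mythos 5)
Your proposal is correct and matches the paper's (implicit) reasoning: the corollary is stated without proof precisely because it is the $c=0$ case of Theorem \ref{T1}, with the empty wedge of spheres interpreted as a point. Your explicit handling of the vacuous common-dimension condition and the empty-wedge convention is exactly the right justification.
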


Lemma \ref{L1} has been discovered by Jonsson \cite{MR2018427} 
and independently by Hersh \cite{{MR2164921}}. It provides a way to patch together acyclic matchings.

\begin{Lemma}\label{L1}
Let $\Delta$ be a boolean cell complex such that 
$\Delta=\bigcup_{\alpha\in Q} \Delta_{\alpha}$ 
where the index set  $Q$ is partially ordered. Assume that:
\begin{enumerate}
\item Each cell $\gamma\in \Delta$ is only in one $\Delta_{\alpha}$ 
(i.e., the union is disjoint).
\item $\bigcup_{\beta \leq \alpha \in Q}\Delta_{\beta}$ 
is an order ideal of $P(\Delta)$ for every $\alpha \in Q$.
\end{enumerate}
For each $\alpha \in Q $, suppose that  $M_{\alpha}$ is an acyclic matching
on $P(\Delta_{\alpha})$ where $P(\Delta_{\alpha})$ is the subposet of 
$P(\Delta)$ induced by $\Delta_{\alpha}$.
Then $\bigcup_{\alpha\in Q}M_{\alpha}$ is an acyclic matching on
$P(\Delta)$.
\end{Lemma}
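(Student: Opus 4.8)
The plan is to form the map $M=\bigcup_{\alpha\in Q}M_\alpha$ and to verify directly that it is (i) a matching on $P(\Delta)$ and (ii) acyclic. By condition~(1) every cell lies in a unique block, so there is a well-defined index map $q\colon P(\Delta)\to Q$ with $x\in\Delta_{q(x)}$, and $M(x):=M_{q(x)}(x)$ is well defined; since $M_{q(x)}$ maps $\Delta_{q(x)}$ to itself, $M$ is an involution. The two properties above are exactly what must be checked to conclude.

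The first thing to check is that $M$ is genuinely a matching on $P(\Delta)$, that is, that a matched pair $x,M(x)$ is a cover in $P(\Delta)$ and not merely a cover in the induced subposet $P(\Delta_{q(x)})$. Suppose $x\lessdot M(x)$ holds in $P(\Delta_\alpha)$ (writing $\alpha=q(x)$) but some $z$ satisfies $x<z<M(x)$ in $P(\Delta)$. Since $z<M(x)\in\Delta_\alpha$, condition~(2) places $z$ in some block $\Delta_\gamma$ with $\gamma\le\alpha$. If $\gamma=\alpha$, then $z\in\Delta_\alpha$ sits strictly between $x$ and $M(x)$, contradicting the cover relation in the induced subposet. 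If $\gamma<\alpha$, then $x<z\in\Delta_\gamma$, and condition~(2) applied to $\gamma$ forces $x\in\Delta_\beta$ for some $\beta\le\gamma<\alpha$, contradicting $x\in\Delta_\alpha$ together with disjointness. Hence matched pairs are covers in $P(\Delta)$ and $M$ is a matching.

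The core of the argument is acyclicity. Suppose, for contradiction, that $G_M(P(\Delta))$ contains a directed cycle; by Lemma~\ref{LL1} it has the form $a_1\lessdot M(a_1)\gtrdot a_2\lessdot\cdots\gtrdot a_k\lessdot M(a_k)\gtrdot a_1$. I then track the index $q$ around this cycle. Each upward (matching) edge joins $a_i$ to $M(a_i)$ inside a single block, so $q(a_i)=q(M(a_i))$. Each downward edge is a cover $a_{i+1}\lessdot M(a_i)$, so $a_{i+1}<M(a_i)\in\Delta_{q(a_i)}$, and condition~(2) gives $q(a_{i+1})\le q(a_i)$. Reading around the whole cycle yields $q(a_1)\ge q(a_2)\ge\cdots\ge q(a_k)\ge q(a_1)$, forcing every index to equal a common value $\alpha$. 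Thus the entire cycle lies in $\Delta_\alpha$. Since a cover relation in $P(\Delta)$ between two elements of $\Delta_\alpha$ is automatically a cover relation in the induced subposet $P(\Delta_\alpha)$, the cycle is a directed cycle in $G_{M_\alpha}(P(\Delta_\alpha))$, contradicting the acyclicity of $M_\alpha$. Therefore $M$ is acyclic.

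The only real obstacle is extracting the correct consequences of condition~(2): it must be used both to promote induced-subposet covers to genuine covers in $P(\Delta)$ and to make the block index non-increasing along downward edges, and one must keep straight that passing to an induced subposet can only create new cover relations, never destroy existing ones. The minimum cell of $P(\Delta)$ is a harmless edge case, handled by noting that it lies in a single block and is below everything.
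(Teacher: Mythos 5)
Your proof is correct. Note, however, that the paper does not prove Lemma~\ref{L1} at all: it is quoted from Jonsson and Hersh, so there is no in-paper argument to match. What you have written is essentially the standard ``patchwork'' argument from those sources: along every directed edge of $G_M(P(\Delta))$ the block index $q$ is non-increasing (constant along matching edges, weakly decreasing along downward covers by condition~(2)), so any directed cycle of the form guaranteed by Lemma~\ref{LL1} has constant index and lives inside a single $\Delta_\alpha$, where it contradicts the acyclicity of $M_\alpha$; your observation that cover relations of $P(\Delta)$ between elements of $\Delta_\alpha$ remain covers in the induced subposet $P(\Delta_\alpha)$ is exactly the point needed to transport the cycle into $G_{M_\alpha}(P(\Delta_\alpha))$.

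One merit of your write-up worth noting: you verify that a pair matched by $M_\alpha$, which is a cover only in the induced subposet $P(\Delta_\alpha)$ a priori, is a genuine cover in $P(\Delta)$ --- without this, $M$ would not even be a matching on $P(\Delta)$ in the sense of the paper's definition. This step is often glossed over, and your two-case argument (an intermediate $z$ with $q(z)=\alpha$ contradicts the induced cover; $q(z)<\alpha$ contradicts disjointness via a second application of condition~(2)) is exactly right. Two small points of care that you handled correctly but should keep in mind: the chain $q(a_1)\ge q(a_2)\ge\cdots\ge q(a_k)\ge q(a_1)$ forces equality even though $Q$ is only partially ordered (transitivity plus antisymmetry), and the minimum element of $P(\Delta)$ needs no special treatment beyond what you say, since it is subject to the same arguments as any other cell of its block.
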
 
\smallskip
\subsection{Coxeter systems}\label{sec:cox}
We review some facts on Coxeter groups for later use. For more, 
we refer  the reader to \cite{MR2133266} or \cite{MR1066460}. 

Let $W$ be a group generated by a finite set of involutions $S$, where 
$ (ss')^{m(s,s')}=e $ for $ m(s,s)=1 $, $m(s,s')=m(s',s)\geq 2$ 
for all $ s\neq s' $ in $S$ and $ m(s,s')\in \mathbb{Z}^{+}\cup \{\infty\} $. 
Here, $e$ denotes the identity element in $W$.
The group $W$ as described above, together with its generating set $S$ form a pair 
$(W,S)$ which is called a \emph{Coxeter system}. Any $w\in W$ is a product of 
generators in $ S $. That is $ w=s_{1}s_{2}\cdots s_{k} $ for some $s_{i}\in S$. 
If $k$ is smallest among all such expressions for $w$, then $k$ 
is called the \emph{length} of $w$, denoted $\ell(w)=k$, and $s_{1}s_{2}\cdots s_{k}$ 
is called a \emph{reduced expression} for $w$. Note that we do not distinguish 
notationally between an expression and the group element it represents.

A Coxeter system $(W,S)$  is sometimes represented by its \emph{Coxeter graph}. 
It is the edge-labeled (simple, undirected) graph with vertex set  $S$ 
and an edge labeled 
$m(s,s')$  connecting two vertices $s$ and $s'$ if $m(s,s')\geq 3$.  
By convention, the label is usually omitted if it is equal to 3, 
see Figure \ref{figs:d}.

  If $W$ is finite, it has a unique longest element $w_{0}$. 
 A subgroup $W_{J}$ of $W$, generated by $J\subseteq S$ is called 
\emph{parabolic}. If $W_{J}$ is finite, its longest element is
denoted by $w_{0}(J)$.

A Coxeter system is \emph{irreducible} if its Coxeter graph is connected.
The Coxeter graphs in Figure \ref{figs:d} indicate the \emph{classification} 
of finite irreducible Coxeter groups. 
There are  three classical families of types
$A_{n}$ $(n\geq 1)$, $B_{n}$ $(n\geq 2)$, $D_{n}$ $(n\geq 4)$, 
six exceptional
 groups 
of types $E_{6}$, $E_{7}$, $E_{8}$, $F_{4}$, $H_{3}$ and  $H_{4}$, 
 and
 one family of dihedral groups of type $ I_{2}(m)$
$(m\geq 3)$.

\begin{figure}[ht]
		\includegraphics[width=\linewidth]{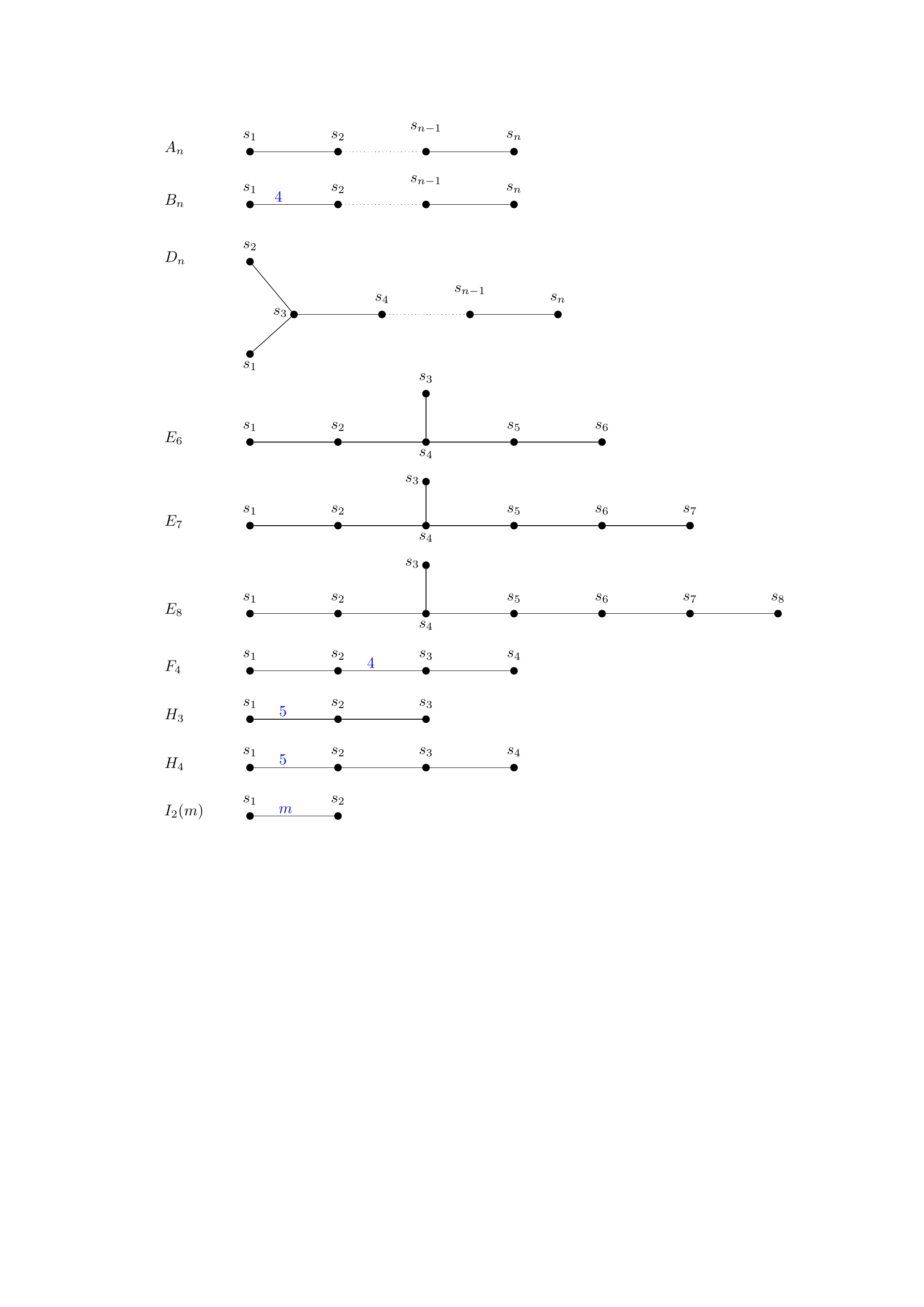}
		\vspace{-5cm}
		\caption{The finite, irreducible Coxeter groups.}
		\label{figs:d}
	\end{figure}	
\setlength\belowcaptionskip{-3ex}

Given $w\in W$, define the \emph{right descent} set of $w$ by  
$$D_{R}(w)=\{s\in S| \ell(ws)<\ell(w)\}.$$
For $s\in S$,  $s \in D_{R}(w)$ if and only if $w$ has a reduced expression that ends in $s$.

Let $T:=\{wsw^{-1}| w\in W, s\in S\}$ be the set of \emph{reflections} in $W$.
\begin{Definition}
 For $u,w\in W$, let $u\rightarrow w$ if there is $t\in T$ such that $w=ut$ and 
 $\ell (u)< \ell(w)$. The \emph{Bruhat order} is the partial order relation on $W$ 
 which is defined by 
$u\leq w$ if there is a sequence 
$$u=u_{0}\rightarrow u_{1}\rightarrow \cdots u\rightarrow u_{n}=w.$$
\end{Definition}

\begin{Definition}
	Let $(W,S)$ be a Coxeter system and consider $W$ as a poset under Bruhat order.
	An element $w\in W$ is called \emph{boolean} if its lower principal order ideal 
	 is isomorphic to a boolean algebra. 
\end{Definition}
The following lemma is a characterization of boolean elements. 
For $W$ of type $A$, $B$, or $D$ this lemma is \cite[Proposition 7.3]{MR2333139}.
\begin{Lemma}\cite{MR2538015}\label{T2}
	Let $(W,S)$ be a Coxeter system and $w\in W$. 
	Then $w$ is boolean if and only if it has some reduced expression 
	without repeated letters.
\end{Lemma}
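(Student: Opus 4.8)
The plan is to pass through the \emph{subword property} of Bruhat order: $u \le w$ if and only if some (equivalently, every) reduced expression of $w$ admits a reduced expression of $u$ as a (not necessarily consecutive) subword. I would also use that Bruhat order is graded by the length function $\ell$, and that the \emph{support} of $w$---the set of simple reflections occurring in a reduced expression of $w$---is well defined, with $s \le w$ iff $s$ lies in the support of $w$. Before treating either implication, I would record the auxiliary fact that \emph{any word $s_{a_1}\cdots s_{a_m}$ in pairwise distinct simple reflections is automatically reduced.} This I would prove by induction on $m$: if $s_{a_1}\cdots s_{a_{m-1}}$ is reduced but $s_{a_1}\cdots s_{a_m}$ is not, then $s_{a_m} \in D_{R}(s_{a_1}\cdots s_{a_{m-1}})$, forcing $s_{a_m}$ into the support $\{s_{a_1},\dots,s_{a_{m-1}}\}$ of $s_{a_1}\cdots s_{a_{m-1}}$, contradicting distinctness.

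For the backward implication, suppose $w = s_1\cdots s_k$ is a reduced expression with distinct letters. I would define $\phi\colon 2^{[k]} \to [e,w]$ by $\phi(I)=\prod_{i\in I}s_i$ (indices in increasing order). By the auxiliary lemma each such product is reduced of length $|I|$, and by the subword property $\phi(I)\le w$, so $\phi$ is well defined. The subword property also shows every $u\le w$ is realized as such a subword, giving surjectivity. Injectivity and the order-reflecting property both follow from the support computation $\mathrm{supp}(\phi(I)) = \{s_i : i\in I\}$ together with distinctness of the $s_i$: from $\phi(I)=\phi(J)$ we get equal supports hence $I=J$, and from $\phi(I)\le\phi(J)$ the subword property forces $\mathrm{supp}(\phi(I))\subseteq\{s_j:j\in J\}$, hence $I\subseteq J$. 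As $\phi$ is clearly order-preserving ($I\subseteq J$ makes $\phi(I)$ a subword of $\phi(J)$), it is an isomorphism of posets, so $[e,w]\cong 2^{[k]}$ is boolean.

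For the forward implication, assume $[e,w]\cong 2^{[n]}$. The atoms of $[e,w]$ are exactly the simple reflections $\le w$, i.e.\ the support of $w$, so $[e,w]$ has $|\mathrm{supp}(w)|$ atoms; on the other hand a rank-$n$ boolean lattice has exactly $n$ atoms and its top element has rank $n$. Since Bruhat order is graded by $\ell$, the rank of $w$ in $[e,w]$ equals $\ell(w)$, whence $\ell(w) = n = |\mathrm{supp}(w)|$. A reduced expression of $w$ then has length $\ell(w)=|\mathrm{supp}(w)|$ while using precisely the support letters, so a word of that length drawing on exactly that many distinct letters must use each once, i.e.\ it has no repeated letters. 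I expect the main obstacle to be the auxiliary reducedness lemma together with its use in the order-reflecting step: the subword property yields only containments of \emph{letter sets}, and the entire argument hinges on translating those back into containments of \emph{index sets}, which is exactly the point where distinctness of the chosen reduced expression is indispensable.
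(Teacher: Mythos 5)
Your proof is correct and complete: the paper itself states this lemma as a citation to Ragnarsson--Tenner \cite{MR2538015} without reproducing a proof, and your argument via the subword property---the auxiliary fact that distinct-letter words are automatically reduced, the explicit isomorphism $\phi\colon 2^{[k]}\to[e,w]$ with the order-reflecting step handled through well-definedness of support, and the atom/rank count $\ell(w)=|\mathrm{supp}(w)|$ for the converse---is essentially the standard proof of this characterization. Note that your forward direction in fact establishes the stronger statement, recorded in the paper right after the lemma, that \emph{no} reduced expression of a boolean element has repeated letters.
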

In fact, Lemma \ref{T2} is equivalent to the statement that  ``$w$  is  boolean
if and only if \emph{no} reduced expression for $w$ has repeated letters.''
Let $\mathbb{B}(W)$ denote the subposet of $W$ induced by 
the boolean elements. The subposet $\mathbb{B}(W)$ 
is the \emph{boolean ideal}. Note that $\mathbb{B}(W)$ is a 
simplicial poset.
\begin{Definition}
	For a Coxeter system $(W,S)$, the \emph{boolean complex}  
	of $(W,S)$ 
	is the boolean cell complex $\Delta(W)$ whose face poset is
	$\mathbb{B}(W)$.
\end{Definition}
All maximal cells of $\Delta(W)$  have dimension $ |S|-1$.
Boolean complexes of Coxeter groups were studied by 
Ragnarsson and Tenner \cite{MR2538015}. 
They showed that for every Coxeter system $(W,S)$, there is 
a nonnegative integer $\beta(W)$ called \emph{boolean number} 
such that $\Delta(W)$ is homotopy equivalent to a wedge 
of $\beta(W)$ spheres of 
dimension $\vert S \vert-1$. 
Moreover, they found a recurrence formula for computing $\beta(W)$ in 
terms of the (unlabeled) Coxeter graph. Note that the labels of the Coxeter 
graph have no effect in determining $\beta(W)$. 
That is, if $W$ and  $W'$ have isomorphic Coxeter graphs when ignoring all edge 
labels that are at least 4, then $\mathbb{B}(W) \cong \mathbb{B}(W')$. In particular, $\beta(W)=\beta(W')$ \cite{MR2538015}.
\subsection{Reduced $\underline{S}$-expressions}\label{sec:red}
In this subsection we review some material on reduced $\underline{S}$-expressions 
and boolean involutions needed in Sections \ref{sec:main} and \ref{sec:mainn}.

For a Coxeter system $(W,S)$, let $I=\{w\in W| w=w^{-1}\}$ denote the set of involutions. Consider an alphabet $\underline{S}=\{\underline{s} \: | \: s\in S\}$. 
\begin{Definition}\label{Dd}
The free monoid
$\underline{S}^{\ast}$ acts from the right on the set
$W$ by
$$w\underline{s}=\begin{cases}
	 ws \: \: \:\: \text{if}\: \: \: sws=w,\\
	sws\: \: \: \text{otherwise},
\end{cases}$$
and $w\underline{s}_{1}\underline{s}_{2}\cdots \underline{s}_{k}=
(\cdots ((w\underline{s}_{1})\underline{s}_{2})\cdots \underline{s}_{k})$.
\end{Definition}

Note that $w\underline{s}\,\underline{s}=w$ for all $w\in W$ and $s\in S$.
For the identity element $e\in W$, write
$\underline{s}_{1}\underline{s}_{2}\cdots \underline{s}_{k}$ for
$e\underline{s}_{1}\underline{s}_{2}\cdots \underline{s}_{k}$.

 For every $w\in I$ there are symbols $\underline{s}_{1}, 
\underline{s}_{2},\ldots,\underline{s}_{k}$ such that $w=\underline{s}_{1}
\underline{s}_{2}\cdots\underline{s}_{k}$ and, conversely,  all such 
elements are involutions \cite{MR1066573}. The minimal $k$ such that
$w=\underline{s}_{1}\underline{s}_{2}\cdots \underline{s}_{k}$ for some
$\underline{s}_{1}, \underline{s}_{2}, \ldots,  \underline{s}_{k}\in
\underline{S}$ is called the $rank$ of $w$, and is denoted $\rho(w)$.
The expression $\underline{s}_{1}\underline{s}_{2}\cdots \underline{s}_{k}$
is then called a \textit{reduced} $\underline{S}$\textit{-expression} for $ w$.

Reduced $\underline{S}$-expressions appears under various names in the literature. It is (the right-handed version of) what Richardson and Springer call ``admissible sequences'' \cite{MR1066573} and Hamaker, Marberg, and Pawlowski \cite{MR3627501} refer to as ``involution words''. In this paper the notation 
is taken from \cite{MR2286056, MR2430307}.

\begin{Definition}
Let $\alpha_{\underline{s},\underline{s}'}=\underline{s}\,\underline{s}'
\underline{s}\cdots$ be the alternating word in  
$\underline{s}$ and $\underline{s}'$ where the number of letters 
$\underline{s}$ and $\underline{s}'$  in 
$\alpha_{\underline{s},\underline{s}'}$ is equal to $m(s,s')$. 
Operating on words in $\underline{S}^{\ast}$, 
we define a \emph{braid move} as the replacement of $\alpha_{\underline{s},\underline{s}'}$ 
by $\alpha_{\underline{s}',\underline{s}}$.
\end{Definition}
In fact braid moves in a reduced $\underline{S}$-expression for 
$w\in I$ preserve $w$ (see  \cite{MR3627501}).
\begin{Definition}
Let $\mathcal{\hat{R}}(w)$ denote the set of all
 reduced $\underline{S}$-expressions for $w\in I$. 
For $u,w\in I$, the replacement of one element 
in $\mathcal{\hat{R}}(u)$ by another in the beginning 
of a reduced $\underline{S}$-expression for $w\in I$ is called an 
\emph{initial move}. 
\end{Definition}
The following theorem provides a minimal list of initial moves that 
combined with braid moves suffice to connect all elements of 
$\mathcal{\hat{R}}(w)$.
\begin{Theorem} \cite{MR3861777}\label{BM}
Let $(W, S)$ be a Coxeter system and $w\in I$. Any two reduced
$\underline{S}$-expressions for $w$ can be connected by a 
sequence of braid moves and initial moves that replace $u$ with 
$v$ if  $u, v \in \mathcal{\hat{R}}(w_{0}(J))$ for some  
$J \subseteq S$. 
The following parabolic subgroups $W_{J}$ are necessary and sufficient:
\begin{enumerate}
  \item $W_{J}$ of type $B_{3}$;
  \item $W_{J}$ of type $D_{4}$;
  \item $W_{J}$ of type $H_{3}$;
  \item $W_{J}$ of type $I_{2}(m)$, for $m\geq 3$.
\end{enumerate}
\end{Theorem}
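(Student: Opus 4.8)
The plan is to prove the statement as an involution analogue of the Matsumoto--Tits word property, in which the classical ``braid moves suffice'' conclusion is weakened to ``braid moves together with the four listed families of initial moves suffice.'' I would recast the content as a connectivity assertion: the graph whose vertices are the elements of $\mathcal{\hat{R}}(w)$, with an edge for each braid move and each permitted initial move, is connected for every $w\in I$. The argument then splits into a \emph{sufficiency} half (these moves connect everything) and a \emph{necessity} half (no single family can be discarded).

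For sufficiency I would induct on the rank $\rho(w)$, the engine being a lifting lemma in the spirit of the exchange condition: if $\underline{s}$ is a descent of $w$ (that is, $\rho(w\underline{s})<\rho(w)$, equivalently $w$ has a reduced $\underline{S}$-expression ending in $\underline{s}$), then \emph{any} reduced $\underline{S}$-expression for $w$ can be carried, using only braid moves and the allowed initial moves, to one ending in $\underline{s}$. Granting this, the theorem follows quickly: given two reduced $\underline{S}$-expressions $\underline{a}$ and $\underline{b}$ for $w$, take $\underline{s}$ to be the last letter of $\underline{b}$, use the lifting lemma to move $\underline{a}$ to an expression also ending in $\underline{s}$, delete the common final letter, and apply the inductive hypothesis to the rank-$(\rho(w)-1)$ involution $w\underline{s}$.

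The heart of the matter is the lifting lemma, and this is where the exceptional types appear. When one tries to migrate a descent $\underline{s}$ to the end of an expression, the obstruction is always \emph{local}: it is governed by the standard parabolic subgroup $W_J$ generated by the letters that interfere with $\underline{s}$, and one shows that $J$ may be taken of rank at most four. Inside a fixed finite parabolic one compares reduced $\underline{S}$-expressions of the relevant elements directly; the key point is that ordinary braid moves already suffice unless the local element is the longest element $w_0(J)$, whose reduced $\underline{S}$-expressions are too short for any braid move (which uses an alternating word of length $m(s,s')$) to apply. A finite verification over the low-rank Coxeter types then isolates exactly $B_3$, $D_4$, $H_3$, and the dihedral family $I_2(m)$ as the cases producing such an irreducible initial relation. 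For necessity, for each listed type I would exhibit two reduced $\underline{S}$-expressions of $w_0(J)$ that become disconnected once the initial moves of that type are forbidden and only braid moves plus initial moves of strictly smaller parabolics remain. The dihedral case is the cleanest illustration: in $I_2(3)$ with generators $s,s'$ one checks that $\underline{s}\,\underline{s}'$ and $\underline{s}'\,\underline{s}$ are both reduced $\underline{S}$-expressions for $w_0=ss's$, of rank $2$; since a braid move here needs an alternating word of length $m(s,s')=3>2$ and the only proper parabolics have rank at most $1$, no braid or smaller initial move applies, so the $I_2(3)$ move is indispensable. The analogous, more involved computations for $B_3$, $D_4$, and $H_3$ finish the argument.

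The step I expect to be the main obstacle is the localization-and-finiteness claim inside the lifting lemma: confining every obstruction to a parabolic of bounded rank and then proving that no type of rank five or higher, such as $D_5$ or $B_5$, contributes a new irreducible initial relation. Establishing this bound, and thereby ruling out further families beyond the four listed, is the delicate point that pins down the exact classification; everything else is a matter of organizing the induction and carrying out the explicit small-rank checks.
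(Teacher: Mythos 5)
You should first be aware that the paper you are trying to match does not prove this statement at all: Theorem \ref{BM} is imported verbatim from the cited reference \cite{MR3861777} (Hansson and Hultman's word property for twisted involutions), so the only fair benchmark is that source. Measured against it, your architecture is the right one in outline --- a Tits-style induction on the rank $\rho(w)$ driven by a descent-migration lemma (any reduced $\underline{S}$-expression for $w$ with $s\in D_R(w)$ can be moved to one ending in $\underline{s}$ using only the permitted moves), localization of the obstruction to a parabolic of bounded rank, a finite case analysis, and explicit witnesses for necessity. Your dihedral necessity computation is correct: $\rho(w_0(I_2(m)))=\lfloor m/2\rfloor+1<m$, so no braid move (which needs an alternating word of $m$ letters) can act inside $\mathcal{\hat{R}}(w_0(I_2(m)))$, and the initial move is genuinely indispensable.

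However, as a proof your proposal has two genuine gaps. First, the entire difficulty of the theorem is concentrated in exactly the two steps you defer: proving the lifting lemma, and proving that every obstruction localizes to a parabolic of rank at most four with no new irreducible initial relation appearing in rank $\ge 5$ (or in infinite rank-$3$ parabolics, where $w_0(J)$ does not even exist --- your sketch silently assumes the local parabolic is finite, and ruling out obstructions supported on infinite low-rank subsystems is part of the work in \cite{MR3861777}). Asserting these and flagging them as ``the main obstacle'' leaves the theorem unproved; nothing in your outline supplies the mechanism. Second, your stated explanation for why the exceptional moves arise --- that the reduced $\underline{S}$-expressions of $w_0(J)$ are ``too short for any braid move to apply'' --- is only true in the dihedral family. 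In types $B_3$, $D_4$, and $H_3$ the ranks of $w_0(J)$ are $6$, $8$ (type $D_4$: $\rho(w_0)=(12+4)/2$), and $9$ respectively, comfortably long enough for braid moves (including commutations) to act on $\mathcal{\hat{R}}(w_0(J))$; the point there is the subtler one that the braid-move graph on $\mathcal{\hat{R}}(w_0(J))$, even augmented by initial moves of proper parabolics, is disconnected, which must be verified by direct computation. So the correct necessity argument in those three types is a connectivity check, not a length obstruction, and your general ``key point'' would lead you astray if you tried to execute the finite verification as described.
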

In other words, given $x,y\in \mathcal{\hat{R}}(w)$, there exist some $x_{0}, x_{1}, \cdots, x_{k}$ such that $x=x_{0}, x_{k}=y$ 
and for all $i$, $x_{i}$ differs from $x_{i+1}$ by a braid move or an initial move of the kind specified in Theorem \ref{BM}.
\begin{Lemma}\cite{MR2286056}\label{Bl}
Let $w\in I $.  Then $s\in D_{R}(w)$
if and only if $w$ has a reduced $\underline{S}$-expression 
that ends in $\underline{s}$. Moreover, $\rho(w\underline{s}) = \rho(w)+1$ if and only if $s\not \in D_R(w)$.
\end{Lemma}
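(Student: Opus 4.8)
The plan is to reduce both assertions to a single statement about how the ordinary Coxeter length $\ell$ behaves under the action of $\underline{s}$, and then transfer the conclusion to the rank $\rho$ by induction. The starting observation is that for an involution $w=w^{-1}$ one has $\ell(sw)=\ell((sw)^{-1})=\ell(w^{-1}s)=\ell(ws)$ for every $s\in S$, so the left and right descent sets of $w$ coincide. Hence it suffices to establish the following \emph{length lemma}: for $w\in I$ and $s\in S$ one has $\ell(w\underline{s})>\ell(w)$ if $s\notin D_R(w)$ and $\ell(w\underline{s})<\ell(w)$ if $s\in D_R(w)$; in particular $\ell(w\underline{s})\neq\ell(w)$ in all cases.

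To prove the length lemma I would split according to Definition~\ref{Dd}. If $sws=w$ then $w\underline{s}=ws$ and the claim is exactly the defining property of $D_R$. The substantial case is $sws\neq w$, where $w\underline{s}=sws$. Assume $s\notin D_R(w)$, so $\ell(ws)=\ell(sw)=\ell(w)+1$; then $\ell(sws)=\ell(s\cdot ws)\in\{\ell(w),\ell(w)+2\}$, and I must exclude the value $\ell(w)$. Suppose for contradiction that $\ell(sws)=\ell(w)$, and set $x=ws$, so $\ell(x)=\ell(w)+1$. One checks $s\in D_R(x)$, since $\ell(xs)=\ell(w)<\ell(x)$, and $s\in D_L(x)$, since $\ell(sx)=\ell(sws)=\ell(w)<\ell(x)$. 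By the standard fact that $s\in D_L(x)\cap D_R(x)$ forces either $sxs=x$ or $\ell(sxs)=\ell(x)-2$ (see \cite{MR2133266}), and since here $sxs=sw$ has length $\ell(w)+1=\ell(x)$, we must be in the case $sxs=x$, i.e.\ $sw=ws$; but then $sws=w$, contradicting $sws\neq w$. Thus $\ell(sws)=\ell(w)+2$. The case $s\in D_R(w)$ is symmetric. Excluding the value $\ell(sws)=\ell(w)$ is the main obstacle: it is the only point where genuine Coxeter-group input beyond bookkeeping is needed.

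With the length lemma in hand I would prove the two assertions together by induction on $\rho(w)$. The trivial bounds $\rho(w\underline{s})\le\rho(w)+1$ (append $\underline{s}$) and $\rho(w)\le\rho(w\underline{s})+1$ (since $w\underline{s}\,\underline{s}=w$) already give $\lvert\rho(w\underline{s})-\rho(w)\rvert\le 1$. The crux is a \emph{deletion property}: if $s\in D_R(w)$ then $\rho(w\underline{s})=\rho(w)-1$, so that appending $\underline{s}$ to a reduced $\underline{S}$-expression of the shorter involution $w\underline{s}$ produces a reduced $\underline{S}$-expression of $w$ ending in $\underline{s}$. I would obtain this by an exchange-style induction parallel to the classical strong exchange condition: one first checks, inductively from the length lemma and minimality, that the partial products of any reduced $\underline{S}$-expression are strictly length-increasing, and then removes a descent exactly as in the group case, the length lemma guaranteeing that each elementary step strictly changes length. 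Carried out entirely over ranks smaller than $\rho(w)$, this simultaneously yields the forward direction of the first assertion and the descent case of the second.

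Finally I would deduce the remaining directions from the involutive symmetry $w\leftrightarrow w\underline{s}$. If $s\notin D_R(w)$, set $v=w\underline{s}$; by the length lemma $\ell(v)>\ell(w)$ and $s\in D_R(v)$, so the deletion property applied to $v$ gives $\rho(w)=\rho(v\underline{s})=\rho(v)-1$, that is $\rho(w\underline{s})=\rho(w)+1$, completing the second assertion. The converse of the first assertion is then immediate: if some reduced $\underline{S}$-expression of $w$ ends in $\underline{s}$, deleting that last letter exhibits $w\underline{s}$ with $\rho(w\underline{s})<\rho(w)$, and the (now complete) second assertion forces $s\in D_R(w)$, since otherwise $\rho(w\underline{s})=\rho(w)+1$.
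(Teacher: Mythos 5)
The paper does not prove this lemma at all --- it is quoted from \cite{MR2286056} --- so the only question is whether your argument stands on its own. Two of your three stages do. Your ``length lemma'' is correct and completely proved: the reduction to left descents via $w=w^{-1}$, the dichotomy $s\in D_L(x)\cap D_R(x)\Rightarrow sxs=x$ or $\ell(sxs)=\ell(x)-2$ applied to $x=ws$, and the resulting exclusion of $\ell(sws)=\ell(w)$ are all sound. Likewise, your final paragraph correctly derives both assertions of the lemma \emph{formally} from the length lemma together with the deletion property, via the symmetry $w\leftrightarrow w\underline{s}$.

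The genuine gap is the deletion property itself: the claim that $s\in D_R(w)$ forces $\rho(w\underline{s})=\rho(w)-1$, which you dispatch with ``removes a descent exactly as in the group case.'' That phrase conceals the entire content of the cited lemma. In the group case the removal \emph{is} the exchange condition, whose standard proofs (root tracking, or Tits--Matsumoto word combinatorics) do not transfer verbatim to the $\underline{S}$-action, because a single step can act by conjugation, $w\mapsto sws$, changing $\ell$ by $2$. Your auxiliary claim that partial products of a reduced $\underline{S}$-expression are strictly length-increasing is fine inside a joint induction (the last step needs the deletion property at rank $\rho(w)-1$, not just the length lemma --- your phrasing suggests you saw this), and it even yields an honest reduced word for $w$ of the form $s_{i_r}\cdots s_{i_1}s_1s_2\cdots s_k$ by unpacking the twisted steps. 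But applying the classical exchange condition to that word only tells you a letter of the \emph{flat} word can be deleted to give $ws$; it does not automatically produce a deletion in the $\underline{S}$-expression realizing $w\underline{s}$, since the twisted/untwisted pattern of the remaining letters is perturbed, and the crude bounds $\rho(w)-1\le\rho(w\underline{s})\le\rho(w)+1$ leave $\rho(w\underline{s})\in\{k-1,k\}$ undecided. Closing this requires the Richardson--Springer-style argument that scans the expression tracking the sign of the image of $\alpha_s$ and deletes the letter at the sign flip (this is how the exchange property for twisted involutions is actually established in \cite{MR1066573} and \cite{MR2286056}). So the skeleton of your reduction is valid, but the one step where genuine input beyond the length lemma is needed is asserted by analogy rather than proved.
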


Let $\mathrm{Br}(I)$ denote the subposet of the Bruhat order on $W$ induced by $I$. 
The order relation on 
 $\mathrm{Br}(I)$ has been characterized in \cite{MR2286056, MR1066573, MR1266276}.
 
\begin{Lemma}[Subword property of $I$] \cite{MR2286056}
Suppose that $v,w\in I$ and 
$\underline{s}_{1}\underline{s}_{2}\cdots \underline{s}_{k}$ is 
a reduced $\underline{S}$-expression for $w$. Then $ v\leq w $ if and only if 
$ v=\underline{s}_{i_{1}}\underline{s}_{i_{2}}\cdots \underline{s}_{i_{r}}  $ for 
some $ 1\leq i_{1}\leq i_{2}\leq \cdots \leq i_{r}\leq k $.
\end{Lemma}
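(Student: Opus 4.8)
The plan is to exploit the fact that the order $\leq$ in the statement is nothing but the ordinary Bruhat order on $W$ restricted to $I$, so that the standard machinery for the Bruhat order of $W$ is available, and to run the classical induction proving the subword property for $W$ with the $\underline{S}$-action playing the role of right multiplication. I would induct on $\rho(w)=k$. The base case $k=0$ forces $w=e$, where both sides hold exactly when $v=e$. For the inductive step I would take $s=s_k$, the \emph{last} letter of the given reduced $\underline{S}$-expression, so that $s\in D_R(w)$ by Lemma \ref{Bl}; set $w'=w\underline{s}_k=\underline{s}_1\cdots\underline{s}_{k-1}$, which has $\rho(w')=k-1$ and inherits a reduced $\underline{S}$-expression of its own. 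Keeping $s$ tied to the given expression guarantees that the subwords of $\underline{s}_1\cdots\underline{s}_k$ are precisely the subwords of $\underline{s}_1\cdots\underline{s}_{k-1}$, each optionally followed by $\underline{s}_k$, so the induction stays attached to the fixed expression.

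The engine of the argument is an \emph{involution lifting property}: for $v,w\in I$ with $v\leq w$ and $s\in D_R(w)$,
\begin{enumerate}
\item $v\underline{s}\leq w$ always;
\item if $s\notin D_R(v)$, then $v\leq w\underline{s}$; and
\item if $s\in D_R(v)$, then $v\underline{s}\leq w\underline{s}$.
\end{enumerate}
Granting this, the forward direction ($v\leq w\Rightarrow$ subword) runs as follows: if $s\notin D_R(v)$ then $v\leq w'$ by (2), so by induction $v$ is a subword of the expression for $w'$, hence of that for $w$; if $s\in D_R(v)$ then $v\underline{s}\leq w'$ by (3), so by induction $v\underline{s}$ is a subword of $w'$, and appending the final $\underline{s}_k$ displays $v=(v\underline{s})\underline{s}$ as a subword of the expression for $w$. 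The reverse direction (subword $\Rightarrow v\leq w$) is an induction on $k$ that reduces any subword to the last-letter case $w'=w\underline{s}_k<w$ together with one application of (1).

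The crux, and the step I expect to be the main obstacle, is establishing the lifting property, because the $\underline{S}$-action is defined by multiplication in some cases and by conjugation in others, and $v$ and $w$ need not be of the same type for the same $s$. I would reduce each clause to the classical lifting property for the ordinary Bruhat order of $W$ (see \cite{MR2133266}), using crucially that an involution $x$ satisfies $D_L(x)=D_R(x)$, where $D_L$ denotes the left descent set. When $w\underline{s}=ws$ (the multiplication case) a single application of the ordinary property suffices; the delicate situation is $w\underline{s}=sws$, where I would apply the ordinary property twice — once on the right to pass from $w$ to $ws$ and once on the left to pass from $ws$ to $sws$ — after checking that the intermediate element $ws$ carries the required one-sided descent at $s$, which follows from the length relations $\ell(sws)=\ell(w)-2=\ell(ws)-1$ forced by the conjugation case. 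The same analysis applied to $v$, again via $D_L(v)=D_R(v)$, settles whichever of multiplication or conjugation governs $v\underline{s}$. Bookkeeping the four combinations of (multiplication/conjugation for $v$) $\times$ (multiplication/conjugation for $w$) is the only genuine labor, and each case collapses to one or two invocations of the classical statement.

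Finally, I would note an alternative route and why I avoid it: every reduced $\underline{S}$-expression for $w$ unfolds into an ordinary reduced expression for $w$ (appending $s$ in the multiplication case, pre- and appending $s$ in the conjugation case), which tempts one to deduce the involution subword property directly from the ordinary one. This seems less convenient, however, since deleting a single $\underline{S}$-letter from the interior of an $\underline{S}$-expression corresponds to a more intricate alteration of the unfolded ordinary word, so matching $\underline{S}$-subwords with ordinary subwords is not transparent. The lifting-property induction sidesteps this difficulty entirely and is the approach I would carry out.
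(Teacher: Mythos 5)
The paper offers no proof of this lemma --- it is quoted from \cite{MR2286056} --- but your proposal is correct and is essentially the argument of that cited source: Hultman proves exactly your involution lifting property (by the multiplication/conjugation case analysis, reducing to the classical lifting property of Bruhat order together with $D_L(x)=D_R(x)$ for $x\in I$ and the forced length relation $\ell(sws)=\ell(w)-2$ in the non-commuting case) and then runs the classical subword induction on $\rho(w)$ just as you describe. Your four combination cases all go through as predicted --- in the case $v\underline{s}=svs$, $w\underline{s}=ws$ the second step is even trivial, since $svs < vs \le ws$ --- so there is no gap.
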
 
 For $w\in I$, let $B(w)$ denote the principal order ideal generated by $w$ in $\mathrm{Br}(I)$. 
 Then we have:
\begin{Definition}
An element $w\in I$ is called a \emph{boolean involution} if 
$B(w)$ is isomorphic to a boolean algebra.
\end{Definition}
Analogously to the characterization of boolean elements (i.e., Theorem \ref{T2}),
 we have:
\begin{Proposition}\cite{MR2519850}\label{P1}
Let $w\in I$. Then the following are equivalent:
\begin{enumerate}
\item $w$ is a boolean involution.
\item $w$ has some reduced $\underline{S}$-expression without 
repeated letters. 
\end{enumerate}
\end{Proposition}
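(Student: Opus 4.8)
The plan is to prove both implications through a single auxiliary notion: the \emph{support} $\mathrm{supp}(w)\subseteq S$ of an involution $w\in I$, defined as the set of generators $s$ such that $\underline{s}$ occurs in a reduced $\underline{S}$-expression for $w$. The crucial preliminary step is to show that $\mathrm{supp}(w)$ is well defined, i.e.\ independent of the chosen reduced $\underline{S}$-expression. For ordinary Coxeter elements this is immediate from Matsumoto's theorem, but here it is precisely where the subtleties of the involution setting enter, so I expect this to be the main obstacle. I would deduce it from Theorem \ref{BM}: any two reduced $\underline{S}$-expressions are connected by braid moves and initial moves, and each such move preserves the set of generators used. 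A braid move $\alpha_{\underline{s},\underline{s}'}\mapsto\alpha_{\underline{s}',\underline{s}}$ involves only the letters $\underline{s},\underline{s}'$, both occurring on either side (as $m(s,s')\geq 2$), so the generator set is unchanged; an initial move replaces one element of $\mathcal{\hat{R}}(w_{0}(J))$ by another, and since every $s\in J$ satisfies $s\leq w_{0}(J)$, the subword property forces every reduced $\underline{S}$-expression for $w_{0}(J)$ to use exactly the generators in $J$, so again the generator set is preserved. Once this is established I would record two consequences: $D_{R}(w)\subseteq\mathrm{supp}(w)$ (immediate from Lemma \ref{Bl}) and $v\leq w\Rightarrow\mathrm{supp}(v)\subseteq\mathrm{supp}(w)$ (immediate from the subword property).

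For the implication (2) $\Rightarrow$ (1), suppose $\underline{s}_{1}\cdots\underline{s}_{k}$ is a reduced $\underline{S}$-expression for $w$ with the $s_{i}$ pairwise distinct, so $k=\rho(w)$. For $A\subseteq[k]$ write $w_{A}$ for the involution represented by the subword $\prod_{i\in A}\underline{s}_{i}$ taken in increasing order of index. I would first show by induction on $|A|$ that every such subword is itself reduced with $\mathrm{supp}(w_{A})=\{s_{i}:i\in A\}$: writing $A=\{i_{1}<\dots<i_{r}\}$ and letting $u$ be the subword on $i_{1},\dots,i_{r-1}$ (reduced, with support $\{s_{i_{1}},\dots,s_{i_{r-1}}\}$ by induction), appending $\underline{s}_{i_{r}}$ strictly increases the rank because $s_{i_{r}}\notin\{s_{i_{1}},\dots,s_{i_{r-1}}\}=\mathrm{supp}(u)\supseteq D_{R}(u)$, so $s_{i_{r}}\notin D_{R}(u)$ and Lemma \ref{Bl} gives $\rho(u\underline{s}_{i_{r}})=\rho(u)+1$. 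Given this, $A\mapsto w_{A}$ is a poset isomorphism from the boolean lattice $2^{[k]}$ onto $B(w)$: surjectivity is the subword property; injectivity and order-reflection follow since $\mathrm{supp}(w_{A})=\{s_{i}:i\in A\}$ together with distinctness of the $s_{i}$ lets one recover $A$ from $w_{A}$, while $A\subseteq B\Rightarrow w_{A}\leq w_{B}$ is the subword property and its converse uses $\mathrm{supp}(w_{A})\subseteq\mathrm{supp}(w_{B})$. Hence $B(w)$ is boolean.

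For the implication (1) $\Rightarrow$ (2), suppose $B(w)\cong 2^{[n]}$. I would use that $\mathrm{Br}(I)$ is graded by $\rho$ (see \cite{MR2286056}), whence $B(w)$ is graded with top element $w$ of rank $\rho(w)$; comparing with $2^{[n]}$ gives $\rho(w)=n$. The atoms of $B(w)$ are exactly its rank-one elements, i.e.\ the single generators $s\leq w$, which are precisely the elements of $\mathrm{supp}(w)$; since $2^{[n]}$ has $n$ atoms, $|\mathrm{supp}(w)|=n=\rho(w)$. Now any reduced $\underline{S}$-expression for $w$ has exactly $\rho(w)=n$ letters, all drawn from the $n$-element set $\mathrm{supp}(w)$, so no letter can repeat; in particular $w$ has a repetition-free reduced $\underline{S}$-expression. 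This argument in fact yields the stronger statement that \emph{no} reduced $\underline{S}$-expression of a boolean involution has a repeated letter, paralleling the remark following Lemma \ref{T2}.

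As noted, the main obstacle is the well-definedness of $\mathrm{supp}$, where the involution-specific initial moves of Theorem \ref{BM} must be controlled; everything else is a transcription of the subword property and Lemma \ref{Bl}. I would also double-check the single external input, that $\mathrm{Br}(I)$ is graded by $\rho$, since it is exactly what converts ``height equals number of atoms'' into the equality $\rho(w)=n$.
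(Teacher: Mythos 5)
Your proof is correct, but there is nothing internal to compare it with: the paper imports Proposition \ref{P1} from \cite{MR2519850} without proof, so your argument stands as a self-contained substitute built from the paper's stated preliminaries. Both implications check out. The induction via Lemma \ref{Bl} (using $D_R(u)\subseteq \mathrm{supp}(u)$) correctly shows that every subword of a repetition-free reduced $\underline{S}$-expression is itself reduced with the evident support, and this is exactly what makes $A\mapsto w_A$ injective, surjective, order-preserving and order-reflecting. For the converse, your one external input, that $\mathrm{Br}(I)$ is graded by $\rho$, is indeed proved in \cite{MR2286056}, so the citation is legitimate; you were right to flag it, though it is avoidable: the prefixes of a reduced $\underline{S}$-expression for $w$ form a chain of length $\rho(w)$ in $B(w)$ (each prefix is reduced, hence a subword of the next), so $B(w)\cong 2^{[n]}$ forces $\rho(w)\le n$, while the atom count gives $|\mathrm{supp}(w)|=n$ and hence $\rho(w)\ge n$, recovering $\rho(w)=n$ with no gradedness needed. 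Your argument also yields the stronger ``\emph{no} reduced $\underline{S}$-expression repeats a letter,'' matching the remark following Proposition \ref{P1}.

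One structural comment: the step you single out as the main obstacle, the well-definedness of $\mathrm{supp}$, does not require Theorem \ref{BM} at all. The subword property already shows that the letter set of \emph{any} reduced $\underline{S}$-expression for $w$ equals $\{s\in S : s\le w\}$, a description independent of the chosen expression: each letter $t$ gives $t\le w$ as a one-letter subword, and conversely $s\le w$ yields a subword representing $s$; since a word over a letter set $L$ represents an element of $W_L$ (immediate induction from Definition \ref{Dd}) and $W_L\cap S = L$, the letter $\underline{s}$ must occur. Monotonicity of $\mathrm{supp}$ under $\le$ is then genuinely immediate, whereas as written it is not quite immediate, because the subword of a reduced expression witnessing $v\le w$ need not be reduced. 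Note also that this same small $W_L$ lemma is silently needed in your identification of the atoms of $B(w)$ with $\mathrm{supp}(w)$, so proving it once lets you dispense with the analysis of initial moves entirely. Your route through Theorem \ref{BM} is nonetheless correct as far as it goes, and you rightly used it rather than Proposition \ref{P12}, which presupposes booleanness and would have made the argument circular.
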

In fact, Proposition \ref{P1} is equivalent to ``$w$ is 
a boolean involution if and only if \emph{no} reduced 
$\underline{S}$-expression for $w$ has repeated letters.''

Suppose $s,s'\in S$ satisfy $m(s,s')=3$. Let us say that a 
\emph{half-braid move} is the replacement of of 
$\underline{s}\,\underline{s}'$ by $\underline{s}'\underline{s}$ 
in the beginning of a reduced $\underline{S}$-expression. 
\begin{Proposition}\label{P12}
Any two reduced $\underline{S}$-expressions for a boolean involution 
$w\in I$ can be connected by a sequence of braid moves and 
half-braid moves.
\end{Proposition}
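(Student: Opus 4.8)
The plan is to invoke Theorem~\ref{BM} and then argue that, for a boolean involution, the only initial moves that can ever occur are half-braid moves; since braid moves are already permitted, it suffices to control the initial moves. So fix a boolean involution $w\in I$ together with two reduced $\underline{S}$-expressions $x,y\in\mathcal{\hat{R}}(w)$. By Theorem~\ref{BM} there is a chain $x=x_{0},x_{1},\dots,x_{k}=y$ in which each $x_{i}$ is obtained from $x_{i-1}$ by a braid move or by an initial move associated to some $w_{0}(J)$ with $W_{J}$ of type $B_{3}$, $D_{4}$, $H_{3}$, or $I_{2}(m)$. The key point is that every $x_{i}$ lies in $\mathcal{\hat{R}}(w)$, so, since $w$ is boolean, Proposition~\ref{P1} (in its equivalent ``\emph{no} reduced $\underline{S}$-expression has repeated letters'' form) guarantees that no $x_{i}$ contains a repeated letter.

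Next I would analyse a single initial move in the chain. Suppose the passage from $x_{i-1}$ to $x_{i}$ is an initial move associated to $W_{J}$; by definition it replaces a prefix $u\in\mathcal{\hat{R}}(w_{0}(J))$ of $x_{i-1}$ by another $v\in\mathcal{\hat{R}}(w_{0}(J))$. As a prefix of the repetition-free word $x_{i-1}$, the word $u$ has no repeated letters, and since the longest element of a finite parabolic subgroup has full support, every generator of $J$ occurs in $u$. Hence $u$ uses each letter of $J$ exactly once, so $\rho(w_{0}(J))=|u|=|J|$ and $w_{0}(J)$ is itself a boolean involution. Thus the only initial moves that can possibly appear are those whose associated longest element $w_{0}(J)$ is boolean, and I have reduced the problem to deciding for which of the four types this happens.

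The main step is the following rank estimate. Tracking the partial products of a reduced $\underline{S}$-expression of any $z\in I$ gives a saturated chain $e=z_{0}\lessdot z_{1}\lessdot\cdots\lessdot z_{\rho(z)}=z$ in $\mathrm{Br}(I)$; here $\ell(z_{1})=1$ because $e\underline{s}=s$, while each later step changes length via $ws$ or $sws$ and so increases $\ell$ by at most two. Consequently $\ell(z)\le 2\rho(z)-1$, that is $\rho(z)\ge(\ell(z)+1)/2$. Applying this with the standard longest-element lengths $\ell(w_{0})=9,12,15$ in types $B_{3},D_{4},H_{3}$ and $\ell(w_{0})=m$ in type $I_{2}(m)$ yields $\rho(w_{0}(J))\ge 5,7,8$ respectively and $\rho(w_{0}(I_{2}(m)))\ge(m+1)/2$. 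Each of these strictly exceeds $|J|$ (which equals $3,4,3$, and $2$) in every case except $W_{J}$ of type $I_{2}(3)$, where $(m+1)/2=2=|J|$. Hence $w_{0}(J)$ fails to be boolean in all cases except $I_{2}(3)$, so by the previous paragraph every initial move occurring in the chain is associated to a parabolic of type $I_{2}(3)$.

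Finally I would identify these surviving moves. For $W_{J}$ of type $I_{2}(3)=A_{2}$ with $J=\{s,s'\}$ and $m(s,s')=3$, the longest element has rank $2$ and $\mathcal{\hat{R}}(w_{0}(J))=\{\underline{s}\,\underline{s}',\;\underline{s}'\underline{s}\}$, so the associated initial move replaces $\underline{s}\,\underline{s}'$ by $\underline{s}'\underline{s}$ at the beginning of the expression, which is precisely a half-braid move. Therefore every move in the chain from $x$ to $y$ is a braid move or a half-braid move, which proves the proposition. The hard part is the rank estimate of the third paragraph: the whole argument rests on showing that the longest elements of $B_{3}$, $D_{4}$, $H_{3}$, and $I_{2}(m)$ for $m\ge 4$ are \emph{not} boolean involutions, as this is exactly what discards all of Theorem~\ref{BM}'s initial moves except the half-braid ones.
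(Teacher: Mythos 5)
Your proof is correct and takes essentially the same route as the paper: invoke Theorem \ref{BM}, note that since every intermediate word lies in $\mathcal{\hat{R}}(w)$ and is repetition-free by Proposition \ref{P1}, any initial move forces $w_{0}(J)$ to be a boolean involution, and check that among the listed parabolic types this happens only for $I_{2}(3)$, where the initial move is precisely a half-braid move. The one genuine difference is that the paper merely \emph{asserts} that only the longest element of $I_{2}(3)$ is boolean, whereas you actually prove it via the correct and self-contained estimate $\rho(z)\ge(\ell(z)+1)/2$ (first step of a reduced $\underline{S}$-expression adds length $1$, later steps at most $2$) applied with $\ell(w_{0})=9,12,15,m$ in types $B_{3},D_{4},H_{3},I_{2}(m)$, which beats $|J|=3,4,3,2$ except when $m=3$ --- a worthwhile supplement to the paper's one-line justification.
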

\begin{proof}
In the list of parabolic subgroups given in Theorem \ref{BM}, 
it is only in type $I_2(3)$ that the longest element is a boolean involution. 
It has only two reduced $\underline{S}$-expressions, and the initial move 
replacing one of them by the other is a half-braid move.   
\end{proof}

Let $P(\Di(W))$ denote the subposet of $\mathrm{Br}(I)$ induced by
all boolean involutions. 
It is a simplicial poset which we call the \emph{ boolean involution ideal}. 
We have the following definition:
\begin{Definition}\label{D1} Let $ \Di(W) $
be the boolean cell complex whose face poset is $P(\Di(W))$.
\end{Definition}
By abuse of notation, we identify a boolean 
involution with the cell it represents. 	
It follows from Proposition \ref{P12} together with Lemma \ref{Bl} that every $\underline{S}$-expression without repeated letters is reduced. Hence, every maximal cell in $\Di(W)$ has dimension $\vert S\vert-1$.
Let $\beta_I(W)$ denote the absolute value of the reduced Euler
characteristic of $ \Di(W) $.

In contrast to the situation for the boolean complex $\Delta(W)$, it is not the case that $\Di(W)$ only depends on the unlabeled Coxeter graph. As the following lemma shows, it is however not necessary to keep track of the actual labels:
\begin{Lemma}\label{le:r}
If the Coxeter graph of a Coxeter system $(W',S)$ is obtained from that of another system $(W,S)$ by increasing some edge labels which are at least $4$, then $\Di(W') \cong \Di(W)$.
\begin{proof}
In a reduced $\underline{S}$-expression without repeated letters, the possible braid moves are provided by the vertices that are not adjacent in the Coxeter graph, and the half-braid moves come from the vertices that are connected by an edge labeled $3$ (i.e., with omitted label). It follows that the boolean involution ideals of $W$ and $W'$ are isomorphic.
\end{proof}
\end{Lemma}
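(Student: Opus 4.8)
The plan is to prove the two boolean involution ideals $P(\Di(W))$ and $P(\Di(W'))$ are isomorphic as posets; since any finite simplicial poset determines its boolean cell complex up to cellular isomorphism, this immediately yields $\Di(W) \cong \Di(W')$. The organizing idea is to present the boolean involutions of an arbitrary Coxeter system on $S$ purely combinatorially, as equivalence classes of words over $\underline{S}$, and then to observe that neither the underlying set of words nor the equivalence relation feels the increase of labels that are already at least $4$.

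First I would identify boolean involutions with repeated-letter-free words over $\underline{S}$. By the discussion following Definition \ref{D1}, every $\underline{S}$-expression without repeated letters is reduced, so each repeated-letter-free word is a reduced $\underline{S}$-expression of an involution, which is therefore boolean by Proposition \ref{P1}; conversely every boolean involution admits such an expression. Crucially, the set of repeated-letter-free words over $\underline{S}$ depends only on $S$. What could in principle depend on the labels $m(s,s')$ is the rule specifying when two such words represent the same element.

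The crux is to pin down that rule. By Proposition \ref{P12}, two reduced $\underline{S}$-expressions for a boolean involution are connected by braid moves and half-braid moves, and these moves preserve the element. I would then note that inside a repeated-letter-free word the only applicable braid moves are the commutations arising from pairs $s,s'$ with $m(s,s')=2$: for $m(s,s')\ge 3$ the alternating word $\alpha_{\underline{s},\underline{s}'}$ already contains a repeated letter, so such a move can never occur within a repeated-letter-free expression. The half-braid moves, by definition, come precisely from the edges labeled $3$. Hence the equivalence relation is generated entirely by (i) commutations from non-adjacent vertices of the Coxeter graph and (ii) half-braid swaps from label-$3$ edges, while edges labeled $\ge 4$ contribute no move at all. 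Passing from $W$ to $W'$ only enlarges labels that are already $\ge 4$, which creates and destroys no edges and leaves the label-$3$ edges untouched; so the moves (i) and (ii) are literally identical for the two systems. The equivalence classes of repeated-letter-free words therefore coincide, yielding a rank-preserving bijection between the boolean involutions of $W$ and $W'$ (the class of words mapping to the same class of words). To upgrade this to a poset isomorphism I would invoke the subword property of $I$: since $v \le w$ is characterized by $v$ being a letter-deleted subword of a reduced $\underline{S}$-expression for $w$, and subwords of repeated-letter-free words are again repeated-letter-free, hence reduced, the Bruhat-on-involutions relation is read off the words in a label-independent way, so the bijection preserves order in both directions.

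The main obstacle is the middle step: verifying rigorously that no braid move with $m(s,s')\ge 3$ and no initial move other than the half-braid can survive inside a repeated-letter-free expression, so that the whole combinatorial structure is governed only by the non-adjacency pattern and the label-$3$ edges. Once this is in place, the label-$\ge 4$ increases are invisible to both the elements and the order, and the isomorphism $\Di(W)\cong\Di(W')$ follows formally.
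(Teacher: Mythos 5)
Your proposal is correct and takes essentially the same approach as the paper's own (much terser) proof: both arguments reduce to the observation that within repeated-letter-free reduced $\underline{S}$-expressions the only available moves are commutations coming from non-adjacent vertices and half-braid moves coming from edges labeled $3$ (via Proposition \ref{P12}), so that edges labeled $\ge 4$ contribute no moves and the boolean involution ideals of $W$ and $W'$ coincide. The extra details you supply---that repeated-letter-free words are automatically reduced, that braid moves with $m(s,s')\ge 3$ are impossible since $\alpha_{\underline{s},\underline{s}'}$ would repeat a letter, and that the order transfers by the subword property---are precisely what the paper's one-paragraph proof leaves implicit.
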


\section{ Acyclic matchings on the boolean involution ideal} \label{sec:main}

Say that a Coxeter system $(W,S)$ (with $S$ finite) is {\em ordered} if the generating 
set $S$ is endowed with a total order. For such a system, with $s = \max S$, define
\[
P_1(W) := \{a\in \Di(W)\mid\ \s \not \le a\text{ or }s\in D_R(a)\}.
\]
Then, let
\[
\begin{split}
\Gamma(W) :&= \Di(W)\setminus P_1(W)\\
 &= \{a\in \Di(W)\mid \s \le a\text{ and }s\not \in D_R(a)\}.
\end{split}
\]
\begin{Definition}\label{Gamma}
Suppose $(W,S)$ is an ordered Coxeter system. A matching $M$ 
on $P(\Di(W))$ is a {\em $\Gamma$-matching} if it satisfies the following properties:
\begin{itemize}
\item $M$ is acyclic;
\item All critical cells of $M$ are of top dimension $|S| - 1$;
\item $M$ preserves $\Gamma(W)$;
\item All critical cells of $M$ belong to $\Gamma(W)$.
\end{itemize}
\end{Definition}
In particular, by Theorem \ref{T1}, if $P(\Di(W))$ has a 
$\Gamma$-matching, then  $ \Di(W) $ is homotopy equivalent to a 
wedge of $ \beta_I(W) $ spheres of dimension $\vert S \vert-1$.

If $(W,S)$ is ordered, we let $W_{-k}$ denote the parabolic subgroup 
which is generated by the first $|S|-k$ elements of $S$, i.e.\ the $k$ largest 
elements are removed from $S$. The corresponding subsystem is ordered 
with the order inherited from $S$.

\begin{Definition}\label{Path}
Let $(W,S)$ be an ordered Coxeter system with $S =\{s_1<\cdots<s_n\}$ where $n\geq 3$. 
We say that $(W,S)$ is {\em path ended} if $s_n$ commutes 
with every $s_i$ for $i\le n-2$, $s_{n-1}$ commutes with every 
$s_i$ for $i\le n-3$, $s_ns_{n-1}s_n=s_{n-1}s_ns_{n-1}$ and 
$s_{n-1}s_{n-2}s_{n-1}=s_{n-2}s_{n-1}s_{n-2}$. 
\end{Definition}

Definition \ref{Path} means that the Coxeter graph of a path ended system $(W,S)$ is 
as illustrated in Figure \ref{fi:bg(Wn)}, where the parabolic subgroups $W_{-2}$ and $W_{-3}$ are
 also indicated; $W_{-2}$ is generated by all $s_{i}$ inside the blue circle, 
 and $W_{-3}$ is generated by all $s_{i}$ inside the black circle, respectively.
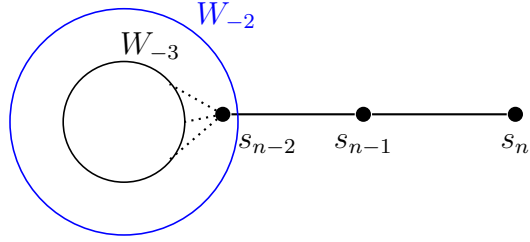
\begin{figure}[ht]
	\centering
	\begin{tikzpicture}[
	thick,
	acteur/.style={
		circle,
		fill=black,
		thick,
		inner sep=0.5pt,
		minimum size=0.2cm
	}
	]
	\node (a1) at (0,0.60) [acteur,label=below:$\;\;\;\;\;\;\; \; \; \; s_{n-2}$]{};
	\node (a2) at (1.85,0.60) [acteur,label=below:$s_{n-1} $]{};
	\node (a3) at (3.85,0.60) [acteur,label=below:$s_{n} $]{};
	\draw[-] (a1) -- (a2);
	\draw[-] (a2) -- (a3);
	\draw [semithick,black] (-1.3,0.5) circle (0.8);
	\coordinate [label=right:\textcolor{blue}{$W_{-2}$}] ($W_{n-2}$) at (-0.5,1.9);
	\draw [semithick,blue] (-1.3,0.5) circle (1.5);
	\coordinate [label=right:\textcolor{black}{$W_{-3}$}] ($W_{n-2}$) at (-1.5,1.50);
	\draw [dotted] (0,0.60) -- (-0.7,1);
	\draw [dotted] (0,0.60) -- (-0.5,0.5);
	\draw [dotted] (0,0.60) -- (-0.7,0.0);
	\end{tikzpicture}
	\caption{A Coxeter graph of a path ended system $(W,S)$} \label{fi:bg(Wn)}
\end{figure}
\setlength\belowcaptionskip{-3ex}
$$\,$$
Now we have the following theorem.
\begin{Theorem}\label{th:main}
Let $(W,S)$ be a path ended Coxeter system. Assume that $P(\Di(W_{-2}))$ 
and $P(\Di(W_{-3}))$ have {\em $\Gamma$-matchings} $M^{-2}$ and $M^{-3}$, respectively. 
Then, $P(\Di(W))$ has a {\em $\Gamma$-matching} $M$. Moreover, the number 
of critical elements of $M$ equals the number of critical elements of $M^{-2}$
 plus the number of critical elements of $M^{-3}$.
\end{Theorem}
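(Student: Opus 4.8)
The plan is to construct the matching $M$ on $P(\Di(W))$ by decomposing $\Di(W)$ into pieces indexed by a partially ordered set and applying the patching lemma (Lemma~\ref{L1}). The natural decomposition uses the structure of $\Gamma$-matchings already available on the smaller systems $W_{-2}$ and $W_{-3}$, together with the path-ended hypothesis which tells us exactly how the two top generators $s_{n-1}, s_n$ interact with the rest of $S$. Since $s_n$ commutes with all $s_i$ for $i \le n-2$ and satisfies only a braid relation with $s_{n-1}$, and since $s_{n-1}$ commutes with all $s_i$ for $i \le n-3$, the boolean $\underline{S}$-expressions for elements of $\Di(W)$ can be organized according to whether and how the letters $\underline{s}_{n-1}$ and $\underline{s}_n$ appear. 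This suggests partitioning $P(\Di(W))$ according to the ``suffix behavior'' of these two generators, so that each block looks like a shifted copy of $P(\Di(W_{-2}))$ or $P(\Di(W_{-3}))$.

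First I would make precise the index poset $Q$ and the blocks $\Delta_\alpha$. The idea is that a boolean involution in $W$ either avoids $\s_n$ entirely (these should reduce to $\Di(W_{-1})$-type behavior but governed by the $W_{-2}$ data once $s_{n-1}$ is handled), uses $\s_n$ in a way that commutes it to the end, or couples $\s_{n-1}$ and $\s_n$ through the braid relation $s_ns_{n-1}s_n = s_{n-1}s_ns_{n-1}$. Using Proposition~\ref{P12}, every reduced $\underline{S}$-expression for a boolean involution is reachable by braid and half-braid moves, so I can choose canonical forms that segregate the top generators. I would then define $M$ on $\Gamma(W)$-preserving blocks by importing $M^{-2}$ and $M^{-3}$ via these canonical forms, and define $M$ on the complementary region $P_1(W)$ by an elementary matching that pairs each cell $a$ with $\s_n \not\le a$ (or with $s_n \in D_R(a)$) to its partner obtained by appending or deleting $\underline{s}_n$ — an operation justified by Lemma~\ref{Bl}, which controls when $\rho(w\underline{s})=\rho(w)+1$. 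Crucially this elementary matching on $P_1(W)$ must have no critical cells, so that all surviving critical cells come from $\Gamma(W)$ via $M^{-2}$ and $M^{-3}$, giving the claimed count as the sum.

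Next I would verify the four defining properties of a $\Gamma$-matching for $M$. Acyclicity follows from Lemma~\ref{L1} once I check hypotheses~(1) and~(2): the blocks are disjoint by construction, and the order-ideal condition requires that the partial order on $Q$ be chosen compatibly with the subword property of $I$, so that lower Bruhat cells stay within lower-indexed blocks. That all critical cells are of top dimension $|S|-1$ and lie in $\Gamma(W)$ should follow because the imported matchings $M^{-2}, M^{-3}$ already have this property on their respective systems, and the shifting operation that embeds them into $W$ must raise dimension by exactly the number of appended top letters ($2$ from $W_{-2}$, $3$ from $W_{-3}$, or the appropriate bookkeeping) so that top-dimensional critical cells remain top-dimensional. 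The preservation of $\Gamma(W)$ by $M$ reduces to checking that the matching operation commutes with the conditions $\s\le a$ and $s\notin D_R(a)$, again using Lemma~\ref{Bl} to track right descents under the $\underline{S}$-action.

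\textbf{The main obstacle} I expect is the bookkeeping that makes the patching hypothesis~(2) hold simultaneously with the no-critical-cells condition on $P_1(W)$ and the dimension-preservation of the imported critical cells. Specifically, the braid relation between $s_{n-1}$ and $s_n$ means the two generators do not behave as an independent ``product'' factor, so the decomposition cannot be a naive product of complexes; I must choose canonical $\underline{S}$-expressions (via Proposition~\ref{P12}) carefully enough that the $W_{-2}$ piece and the $W_{-3}$ piece are genuinely disjoint and their union with the contractible $P_1(W)$ region is an order ideal at each stage. Getting the free-face/elementary matching on $P_1(W)$ to be globally acyclic and critical-cell-free, while respecting the inherited partial order, is where the real combinatorial work lies, and it is precisely the place where the path-ended hypothesis — pinning down that $s_n$ commutes past everything except $s_{n-1}$ — must be used in full force.
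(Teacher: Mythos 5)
Your plan is essentially the paper's own proof: the paper partitions $P(\Di(W))$ into $P_1(W)$ (given the complete acyclic matching $a\mapsto a\underline{s}_n$, justified via Lemma \ref{Bl} just as you say), $P_2(W)=\{x\underline{s}_{n-2}\underline{s}_n\underline{s}_{n-1}\colon x\in\Di(W_{-3})\}$ carrying an imported copy of $M^{-3}$, and $P_3(W)=\{y\underline{s}_n\underline{s}_{n-1}\colon y\in\Gamma(W_{-2})\}$ carrying the restriction of $M^{-2}$, then patches these with Lemma \ref{L1} along the total order $P_1(W)<P_2(W)<P_3(W)$ and counts critical cells exactly as you propose. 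The canonical-form bookkeeping you correctly flag as the main obstacle is resolved in the paper by a rigidity lemma (Lemma \ref{L111}): if $\underline{s}_{n-1}$ comes after both $\underline{s}_{n-2}$ and $\underline{s}_n$ in one reduced $\underline{S}$-expression for a boolean involution, then it does so in every one (since no braid or half-braid move can undo this, by Proposition \ref{P12}), which yields the disjoint identification $\Gamma(W)=P_2(W)\sqcup P_3(W)$ (Lemmas \ref{C2} and \ref{Claim}) and the order-ideal verifications your outline defers.
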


The proof of Theorem \ref{th:main} is deferred to the next subsection.
 Here we proceed to apply it in order to prove Theorem \ref{th:mai2} below, 
 which is the main result of this paper.

For two ordered Coxeter systems $(W,S)$ and $(Z,S')$, 
we say that $(Z,S')$ is a {\em path extension} of $(W,S)$ if 
$S=\{s_1<\cdots<s_n\}$ and $S'=\{s_1<\cdots<s_N\}$ 
for some $N\ge n$, and for all $1\le i\le N$ and $n<j\le N$ with $i\neq j$, it holds that
\[
m(s_i,s_j)=\begin{cases}
2 & \text{if $|i-j|\ge 2$,}\\
3 & \text{if $|i-j| = 1$.}
\end{cases}
\]

In other words, we obtain the Coxeter graph of a path extension of $(W,S)$ 
from that of $(W,S)$ by attaching a (possibly empty) path at the maximum vertex $s_n$. Figure \ref{fi:bg(Z)} illustrates this situation for a path ended system $(W,S)$. This is the setting of the upcoming theorem.
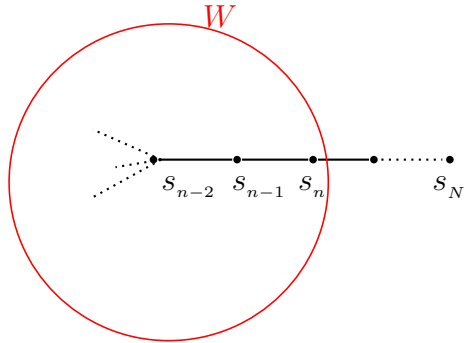
\begin{figure}[ht]
	\centering
	\begin{tikzpicture}[
	thick,
	acteur/.style={
		circle,
		fill=black,
		thick,
		inner sep=0.5pt,
		minimum size=0.1cm
	}
	]
	\node (a1) at (-0.1,0.60) [acteur,label=below:$\;\;\;\;\; \; \; \; s_{_{n-2}}$]{};
	\node (a2) at (1,0.60) [acteur,label=below:$\;\;\;\;\;s_{_{n-1}}$]{};
	\node (a3) at (2,0.60) [acteur,label=below:$s_{_{n}} $]{};
	\node (a4) at (2.8,0.60) [acteur,label=below:]{};
	\node (a5) at (3.8,0.60) [acteur,label=below:$s_{_{N}} $]{};
	\draw[-] (a1) -- (a2);
	\draw[-] (a2) -- (a3);
	\draw[-] (a3) -- (a4);
	\draw [dotted] (2.8,0.60) -- (3.7,0.60);
	\draw [semithick,red] (0.1,0.3) circle (2.1);
	\coordinate [label=right:\textcolor{red}{$W$}] ($W$) at (0.4,2.5);
	\draw [dotted] (0,0.60) -- (-0.9,1);
	\draw [dotted] (0,0.60) -- (-0.6,0.5);
	\draw [dotted] (0,0.60) -- (-0.9,0.1);
	\end{tikzpicture}
	\caption{\small{A Coxeter system $(Z,S')$ as a path extension of a path ended Coxeter system $(W,S)$.} \label{fi:bg(Z)}}
\end{figure}
$$ \, $$

\begin{Theorem} \label{th:mai2} 
Let $(W,S)$ be a path ended Coxeter system such that $P(\Delta_{\mathrm{inv}}(W_{-1}))$, 
$P(\Delta_{\mathrm{inv}}(W_{-2}))$ and $P(\Delta_{\mathrm{inv}}(W_{-3}))$ all have $\Gamma$-matchings. 
Then, for every path extension $(Z,S')$ of $(W,S)$, $\Delta_{\mathrm{inv}}(Z)$ is homotopy equivalent to a wedge 
of $(|S'|-1)$-dimensional spheres. The number of spheres satisfies the recurrence 
$\beta_I(Z) = \beta_I(Z_{-2}) + \beta(Z_{-3}).$
\end{Theorem}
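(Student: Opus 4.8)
The plan is to argue by induction on $|S'|$, peeling vertices off the top of the path one (or two) at a time and applying Theorem \ref{th:main} at each stage. Write $S' = \{s_1 < \cdots < s_N\}$ and, for $n-3 \le m \le N$, let $W^{(m)}$ denote the parabolic subgroup of $Z$ generated by $\{s_1,\dots,s_m\}$, ordered by the inherited order. Then $W^{(n-3)} = W_{-3}$, $W^{(n-2)} = W_{-2}$, $W^{(n-1)} = W_{-1}$, $W^{(n)} = W$, and $W^{(N)} = Z$, while $(W^{(m)})_{-2} = W^{(m-2)}$ and $(W^{(m)})_{-3} = W^{(m-3)}$ for every $m$. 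I would prove, by induction on $m$, the claim that $P(\Di(W^{(m)}))$ admits a $\Gamma$-matching for all $n \le m \le N$; the theorem then follows by taking $m = N$.

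First I would record that each $W^{(m)}$ with $m \ge n$ is again path ended. Since $m \ge n \ge 3$ this is meaningful, and because $Z$ is a path extension of $W$ the top three generators $s_{m-2}, s_{m-1}, s_m$ of $W^{(m)}$ satisfy exactly the commutation and braid relations demanded by Definition \ref{Path}: for $m > n$ these are the path-extension relations $m(s_i,s_j)\in\{2,3\}$, and for $m=n$ they are the hypothesis that $(W,S)$ is path ended. Hence Theorem \ref{th:main} applies to every $W^{(m)}$ with $m \ge n$, provided its two parabolics $W^{(m-2)}$ and $W^{(m-3)}$ carry $\Gamma$-matchings.

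For the base case $m = n$, the parabolics $W_{-2}$ and $W_{-3}$ have $\Gamma$-matchings by hypothesis, so Theorem \ref{th:main} produces a $\Gamma$-matching on $P(\Di(W))$. For the inductive step $m > n$, the relevant parabolics are $W^{(m-2)}$ and $W^{(m-3)}$; whenever the index is at least $n$ a $\Gamma$-matching is supplied by the induction hypothesis, and the finitely many boundary cases where the index drops below $n$ are covered precisely by the three hypothesized $\Gamma$-matchings: $W^{(m-2)} = W_{-1}$ at $m = n+1$, while $W^{(m-3)} = W_{-2}$ at $m = n+1$ and $W^{(m-3)} = W_{-1}$ at $m = n+2$. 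In every case Theorem \ref{th:main} yields a $\Gamma$-matching on $P(\Di(W^{(m)}))$, completing the induction. This is also where one sees why exactly the three seeds $W_{-1}, W_{-2}, W_{-3}$ are needed.

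Finally I would extract the two conclusions. Applying the remark following Definition \ref{Gamma} (i.e.\ Theorem \ref{T1}) to the $\Gamma$-matching on $P(\Di(Z))$ shows that $\Di(Z)$ is homotopy equivalent to a wedge of $\beta_I(Z)$ spheres of dimension $|S'|-1$, where $\beta_I(Z)$ is the common number of (top-dimensional) critical cells. The recurrence then reads off from the count in Theorem \ref{th:main}: the number of critical cells on $P(\Di(Z))$ equals that on $P(\Di(Z_{-2}))$ plus that on $P(\Di(Z_{-3}))$, and since for a $\Gamma$-matching this count coincides with $\beta_I$ (the absolute value of the reduced Euler characteristic of a wedge of $c$ equidimensional spheres being $c$), we obtain $\beta_I(Z) = \beta_I(Z_{-2}) + \beta_I(Z_{-3})$. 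The hard part will be purely organizational, namely verifying that path-endedness is inherited by every truncation $W^{(m)}$ and that the induction bottoms out exactly at $W_{-1}, W_{-2}, W_{-3}$; the substantive construction of the matchings is already packaged inside Theorem \ref{th:main}.
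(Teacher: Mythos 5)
Your proposal is correct and takes essentially the same approach as the paper: the published proof is a two-sentence induction on $|S'|$ via Theorem \ref{th:main} (plus Theorem \ref{T1} for the topological conclusion), and your argument simply carries out that induction in full detail, verifying that each truncation $W^{(m)}$ is path ended and that the base cases land exactly on the hypothesized $\Gamma$-matchings for $W_{-1}$, $W_{-2}$, $W_{-3}$. Your closing observation that the critical-cell count of any $\Gamma$-matching equals $\beta_I$ correctly justifies the recurrence $\beta_I(Z)=\beta_I(Z_{-2})+\beta_I(Z_{-3})$, which the paper leaves implicit.
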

\begin{proof}
Since $(Z,S')$ is path ended, Theorem \ref{th:main} together with Theorem \ref{T1} provide 
the desired conclusions if we are able to show that $Z_{-2}$ and $Z_{-3}$ have $\Gamma$-matchings. 
By induction on $|S'|$, it however follows from Theorem \ref{th:main} that $Z_{-1}, Z_{-2}$, and $Z_{-3}$ all have $\Gamma$-matchings.
\end{proof}

\subsection{Proof of Theorem \ref{th:main}}
We now proceed to prove Theorem \ref{th:main}. 
First let us establish a couple of lemmas.
 
 Let $(W,S)$ be as in Definition \ref{Path}.  
\begin{Lemma}\label{L111}
Let $w\in W$ be a boolean involution. 
If $\underline{s}_{n-1}$ comes after both $\underline{s}_{n-2}$ and
$\underline{s}_{n}$ in some reduced $\underline{S}$-expression for $w$,
then this holds in every reduced $\underline{S}$-expression for $w$.
\end{Lemma}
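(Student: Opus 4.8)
The plan is to invoke Proposition \ref{P12}: any two reduced $\underline{S}$-expressions for the boolean involution $w$ are connected by a finite sequence of braid moves and half-braid moves, in which every intermediate word is again a reduced $\underline{S}$-expression for $w$. It therefore suffices to show that the property in question---call it (P): \emph{$\underline{s}_{n-1}$ occurs after both $\underline{s}_{n-2}$ and $\underline{s}_n$}---is preserved by each such move. A first simplification comes from the remark following Proposition \ref{P1}: since $w$ is a boolean involution, \emph{no} reduced $\underline{S}$-expression for $w$ has a repeated letter. A genuine braid move with $m(s,s')\ge 3$ would introduce a repeated letter into such a word, so the only braid moves that can ever be applied here are the commutation moves coming from $m(s,s')=2$. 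Thus every move in the connecting sequence---commutation or half-braid---merely transposes two adjacent letters. In particular the multiset of letters is invariant along the sequence, so all of $\underline{s}_{n-2},\underline{s}_{n-1},\underline{s}_n$ occur (exactly once) in each reduced $\underline{S}$-expression for $w$, and (P) is a meaningful statement about each of them.

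The key observation is that (P) depends only on the relative orders of the two pairs $\{\underline{s}_{n-2},\underline{s}_{n-1}\}$ and $\{\underline{s}_n,\underline{s}_{n-1}\}$, whereas transposing two adjacent letters alters only the relative order of the single pair being transposed. Hence a move can affect (P) only if it transposes one of these two pairs. A commutation move transposes $\underline{s},\underline{s}'$ only when $m(s,s')=2$; but Definition \ref{Path} gives $m(s_{n-2},s_{n-1})=m(s_{n-1},s_n)=3$, so neither relevant pair commutes and no commutation move can touch them. Consequently (P) is automatically preserved by every commutation move, and the entire question reduces to the half-braid moves.

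This is the crux, and the main obstacle. A half-braid move transposes the first two letters of the expression (when the corresponding $m$-value equals $3$), and unlike a commutation move it \emph{can} swap a non-commuting relevant pair; the point is that the positional constraint---the swap occurs only at the very front---forces (P) to have already failed. I will argue by induction along the connecting sequence, starting from the given expression in which (P) holds, that (P) persists. Assume (P) holds for the current word and that the move transposes a relevant pair. If the first two letters were $\underline{s}_{n-2},\underline{s}_{n-1}$ in either order, then $\underline{s}_n$ would sit in position $\ge 3$: the order $\underline{s}_{n-1}\underline{s}_{n-2}$ puts $\underline{s}_{n-1}$ before $\underline{s}_{n-2}$, while the order $\underline{s}_{n-2}\underline{s}_{n-1}$ puts $\underline{s}_n$ after $\underline{s}_{n-1}$; either way (P) fails, a contradiction. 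Symmetrically, if the first two letters were $\underline{s}_n,\underline{s}_{n-1}$, then $\underline{s}_{n-2}$ would lie in position $\ge 3$ and again one of the two inequalities in (P) would fail. Hence no admissible half-braid move can transpose a relevant pair when (P) holds, so (P) passes to the next word. Inducting along the sequence furnished by Proposition \ref{P12} then shows that (P) holds in every reduced $\underline{S}$-expression for $w$, as claimed.
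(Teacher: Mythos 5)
Your proof is correct and takes essentially the same approach as the paper's: both reduce the claim to Proposition \ref{P12} and observe that the property ``$\underline{s}_{n-1}$ comes after both $\underline{s}_{n-2}$ and $\underline{s}_{n}$'' is invariant under braid moves and half-braid moves. The paper states this invariance in one line, while you supply the verification it leaves implicit (only commutation moves can occur since no reduced $\underline{S}$-expression of a boolean involution repeats a letter, commutations cannot transpose the pairs $\{\underline{s}_{n-2},\underline{s}_{n-1}\}$ and $\{\underline{s}_{n-1},\underline{s}_{n}\}$ as $m=3$ there, and a half-braid move on either pair would require the property to have already failed), all of which is accurate.
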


\begin{proof}
Note that if $\s_{n-1}$ comes after both $\s_{n-2}$ and $\s_{n}$ in a 
reduced $\underline{S}$-expression for $w$, then this cannot be changed by a 
half-braid move or a braid move. Hence the result follows from 
Proposition \ref{P12}.

\end{proof}

Now define \[P_2(W):=\{x\underline{s}_{n-2}
\underline{s}_{n}\underline{s}_{n-1}: x\in \Di(W_{-3})\}.\]
\begin{Lemma}\label{C2}
	We have $ P_2(W) \subseteq \Gamma(W)$.	
\end{Lemma}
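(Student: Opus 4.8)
The plan is to show that every element of the form $x\underline{s}_{n-2}\underline{s}_{n}\underline{s}_{n-1}$, with $x \in \Di(W_{-3})$, is itself a boolean involution of $W$ lying in $\Gamma(W)$, i.e.\ that it satisfies $\s \le a$ and $s = s_n \notin D_R(a)$. First I would verify that these words are legitimate reduced $\underline{S}$-expressions without repeated letters, so that they genuinely represent boolean involutions by Proposition \ref{P1}. Since $x \in \Di(W_{-3})$, the letters appearing in $x$ all come from $\{s_1,\dots,s_{n-3}\}$ and are pairwise distinct; appending $\underline{s}_{n-2}\underline{s}_{n}\underline{s}_{n-1}$ introduces three genuinely new letters, so the resulting word has no repetitions. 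That it is reduced then follows from the remark (after Definition \ref{D1}) that every $\underline{S}$-expression without repeated letters is automatically reduced.

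Next I would check the two defining membership conditions for $\Gamma(W)$. The condition $\s \le a$ (where $\s = \underline{s}_n$ in the notation, i.e.\ the requirement that $s_n$ appear) is immediate from the subword property: the expression for $a = x\underline{s}_{n-2}\underline{s}_{n}\underline{s}_{n-1}$ contains the letter $\underline{s}_n$, so the rank-one involution $s_n$ lies below $a$ in $\mathrm{Br}(I)$. The more delicate condition is $s_n \notin D_R(a)$. By Lemma \ref{Bl}, $s_n \in D_R(a)$ would mean $a$ has some reduced $\underline{S}$-expression ending in $\underline{s}_n$. Here is where Lemma \ref{L111} does the work: in the displayed expression, $\underline{s}_{n-1}$ comes last, hence after both $\underline{s}_{n-2}$ and $\underline{s}_n$, so by Lemma \ref{L111} this positional fact persists in \emph{every} reduced $\underline{S}$-expression for $a$. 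In particular $\underline{s}_{n-1}$ always comes after $\underline{s}_n$, so no reduced expression can end in $\underline{s}_n$, giving $s_n \notin D_R(a)$.

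The main obstacle I anticipate is confirming that the hypotheses of Lemma \ref{L111} are actually met and that the letter $\underline{s}_{n-1}$ really does land strictly after $\underline{s}_n$ in the given word, which requires knowing that all of $x$'s letters precede the appended block; this is visibly true from the construction, but one must be careful that $x$ contains neither $\underline{s}_{n-2}$, $\underline{s}_{n-1}$, nor $\underline{s}_n$ (guaranteed since $x \in \Di(W_{-3})$, whose generators are $s_1,\dots,s_{n-3}$). With that in hand, Lemma \ref{L111} applies directly. I would therefore structure the proof as: (i) the word has no repeated letters, hence is reduced and represents a boolean involution; (ii) $\s \le a$ by the subword property; (iii) $s_n \notin D_R(a)$ via Lemma \ref{L111} and Lemma \ref{Bl}. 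Combining (ii) and (iii) yields $a \in \Gamma(W)$, completing the inclusion $P_2(W) \subseteq \Gamma(W)$.
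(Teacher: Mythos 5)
Your proof is correct and follows essentially the same route as the paper: both arguments apply Lemma \ref{L111} to the expression $x\underline{s}_{n-2}\underline{s}_{n}\underline{s}_{n-1}$ to conclude that $\underline{s}_{n-1}$ comes after $\underline{s}_{n}$ in every reduced $\underline{S}$-expression, and then invoke Lemma \ref{Bl} to get $s_n \notin D_R(a)$. The extra steps you spell out (no repeated letters, hence reduced; $\s \le a$ via the subword property) are details the paper leaves implicit, and they are verified correctly.
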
	
\begin{proof}
	By Lemma \ref{L111}, $ \underline{s}_{n-1} $	comes after
	$ \underline{s}_{n-2} $ and $ \underline{s}_{n} $ in every reduced
	$ \underline{S} $-expression for every element 
	$a\in  P_2(W) $. Hence by Lemma \ref{Bl}, $ s_{n}\not\in D_{R}(a)$.
\end{proof}
Define $ P_3(W):=\Gamma(W)\backslash P_2(W) $. Note that $P_1(W)$, $P_2(W)$, and $P_3(W)$ are pairwise disjoint sets whose union is $\Di(W)$.
\begin{Lemma}\label{Claim}
	$ P_3(W)=\{y\underline{s}_{n}\underline{s}_{n-1}:y\in \Gamma(W_{-2})\} $.
\end{Lemma}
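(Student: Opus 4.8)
The plan is to prove the equivalent statement that $\Gamma(W)$ is the disjoint union of $P_2(W)$ and $Q := \{y\underline{s}_n\underline{s}_{n-1} : y\in\Gamma(W_{-2})\}$; since $P_3(W)=\Gamma(W)\setminus P_2(W)$ by definition, this yields $P_3(W)=Q$. I would first record the structure forced on an element $a\in\Gamma(W)$. Because $\s\le a$ but $s_n\notin D_R(a)$, both $\underline{s}_n$ and $\underline{s}_{n-1}$ must occur in every reduced $\underline{S}$-expression for $a$: if $\underline{s}_{n-1}$ were absent, then $\underline{s}_n$ would commute with all other letters and could be slid to the end by braid moves, forcing $s_n\in D_R(a)$ by Lemma \ref{Bl}. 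The same sliding argument shows that $\underline{s}_n$ must precede $\underline{s}_{n-1}$ in every reduced expression, for otherwise $\underline{s}_n$ could again be moved to the end.

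The technical heart is to upgrade this to a normal form: every $a\in\Gamma(W)$ has a reduced expression ending in $\underline{s}_n\underline{s}_{n-1}$. By Proposition \ref{P12} any two reduced expressions are linked by braid and half-braid moves, and since half-braid moves act only at the front while $s_n$ commutes with all of $s_1,\dots,s_{n-2}$, the relative order of the three rightmost letters is rigid. I would argue that $\underline{s}_{n-1}$ lies to the right of both $\underline{s}_n$ and $\underline{s}_{n-2}$ in every reduced expression: if $\underline{s}_{n-2}$ occurred after $\underline{s}_{n-1}$, then no $\underline{s}_{n-2}$ would separate $\underline{s}_n$ from $\underline{s}_{n-1}$, so $\underline{s}_n$ and then $\underline{s}_{n-1}$ could be commuted to the front, and a half-braid move would reverse their order, contradicting the rigidity just established (consistent with Lemma \ref{L111}). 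With $\underline{s}_{n-1}$ rightmost, all letters following it lie in $\{s_1,\dots,s_{n-3}\}$ and commute with it, so $\underline{s}_{n-1}$ slides to the end; sliding $\underline{s}_n$ to just before it produces the suffix $\underline{s}_n\underline{s}_{n-1}$. I expect this rigidity and normal-form step to be the main obstacle, since it is where the restriction of half-braid moves to the front is used decisively.

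Given the normal form, I would write $b:=a\underline{s}_{n-1}\underline{s}_n$, so that $a=b\underline{s}_n\underline{s}_{n-1}$ and $b$ is the prefix, a boolean involution in $\Di(W_{-2})$. A short argument shows $s_{n-2}$ occurs in $b$: otherwise $b\in W_{-3}$, and commuting $\underline{s}_n\underline{s}_{n-1}$ to the front followed by a half-braid move would again produce a reduced expression ending in $\underline{s}_n$, contradicting $a\in\Gamma(W)$. Now I case split on $b$. If $s_{n-2}\in D_R(b)$, then $b=x\underline{s}_{n-2}$ with $x=b\underline{s}_{n-2}\in\Di(W_{-3})$, whence $a=x\underline{s}_{n-2}\underline{s}_n\underline{s}_{n-1}\in P_2(W)$. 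If $s_{n-2}\notin D_R(b)$, then, since $s_{n-2}$ occurs in $b$, we have $b\in\Gamma(W_{-2})$ and $a\in Q$. This gives $\Gamma(W)\subseteq P_2(W)\cup Q$, hence $P_3(W)\subseteq Q$.

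It remains to check $Q\subseteq P_3(W)$, that is, $Q\subseteq\Gamma(W)$ and $Q\cap P_2(W)=\emptyset$. For $a=y\underline{s}_n\underline{s}_{n-1}$ with $y\in\Gamma(W_{-2})$, repeated use of Lemma \ref{Bl} shows this expression is reduced, and since $\underline{s}_{n-1}$ comes after both $\underline{s}_{n-2}$ (present in $y$) and $\underline{s}_n$, Lemma \ref{L111} forces $s_n\notin D_R(a)$; as $\s\le a$, we obtain $a\in\Gamma(W)$, using Lemma \ref{C2} to keep $P_2$ and the rest apart. Finally, if some $a$ lay in $Q\cap P_2(W)$, then applying $\underline{s}_{n-1}$ and then $\underline{s}_n$ would give $a\underline{s}_{n-1}\underline{s}_n=y\in\Gamma(W_{-2})$ on one hand and $=x\underline{s}_{n-2}$ with $x\in\Di(W_{-3})$ on the other; the latter has $s_{n-2}\in D_R$, contradicting $y\in\Gamma(W_{-2})$. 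Hence $Q\cap P_2(W)=\emptyset$, and therefore $P_3(W)=Q$.
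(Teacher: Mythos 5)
Your proof is correct and takes essentially the same route as the paper's: both show, via the braid-move sliding arguments and the half-braid obstruction (Lemma \ref{L111}), that $\underline{s}_{n-1}$ must come after both $\underline{s}_n$ and $\underline{s}_{n-2}$ in every reduced $\underline{S}$-expression for $a\in P_3(W)$, extract the normal form $a=y\underline{s}_n\underline{s}_{n-1}$ with $y\in\Di(W_{-2})$, and use the descent criterion of Lemma \ref{Bl} together with $a\notin P_2(W)$ to conclude $y\in\Gamma(W_{-2})$, with the reverse inclusion handled exactly as in the paper. Your write-up merely makes explicit the commuting steps and the case analysis on $s_{n-2}\in D_R(y)$ that the paper leaves implicit.
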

 
 \begin{proof}
	For any $ a\in \{y\underline{s}_{n}\underline{s}_{n-1}:y\in \Gamma(W_{-2})\} $
we have that $ a\not\in  P_2(W) $ 
since $ s_{n-2}\not\in D_{R}(y)$.
By Lemma \ref{L111}, $ \underline{s}_{n-1} $ comes after $ \underline{s}_{n-2} $
and $ \underline{s}_{n} $ in every reduced $\underline{S}$-expression
for $a$, hence $\underline{s}_{n}\leq a$ and $s_{n}\notin D_{R}(a)$.
Thus $a\in \Gamma(W)\backslash P_2(W)=P_3(W)$.

Conversely, assume that $a\in P_3(W)$. Since $\underline{s}_{n}\le a$ and $s_n\not \in D_R(a)$, 
$\underline{s}_{n-1}$ comes after $\underline{s}_n$ in every reduced $\underline{S}$-expression for $a$. 
Hence, no reduced $\underline{S}$-expression for $a$ can begin with $\underline{s}_n\underline{s}_{n-1}$, 
which means that $\underline{s}_{n-1}$ also comes after $\underline{s}_{n-2}$ in every reduced 
$\underline{S}$-expression for $a$. Therefore, $a = y\underline{s}_n\underline{s}_{n-1}$ for some 
$y\in \Delta_{\mathrm{inv}}(W_{-2})$. Moreover, $y\in \Gamma(W_{-2})$ because $a\not \in P_2(W)$.
\end{proof}

\begin{proof}[Proof of Theorem \ref{th:main}] 
Define matchings $ M_{1}, M_{2} $ and $ M_{3} $ on
  $ P_1(W), P_2(W) $ and $ P_3(W) $ respectively as:
  \begin{enumerate}
  	\item $ M_{1}(a):=a\underline{s}_{n} $ for
  	 $ a\in P_1(W) $;
  	\item $ M_2(a):=M^{-3}(x)\underline{s}_{n-2}\underline{s}_{n}
  	       \underline{s}_{n-1} $ for
  	       $ a=x\underline{s}_{n-2}\underline{s}_{n}
  	       \underline{s}_{n-1}\in P_2(W) $;
  	\item $ M_{3}(a):=M^{-2}(y)\underline{s}_{n}\underline{s}_{n-1} $
  	      for 
  	      $ a=y\underline{s}_{n}\underline{s}_{n-1}\in P_3(W) $.
  \end{enumerate}
Note that $M^{-2}(y)\in \Gamma(W_{-2})$ since $M^{-2}$ 
preserves $\Gamma(W_{-2})$. This ensures that $ M_{3} $ 
is well defined.
 
 Define also a matching

  \[ M(a):=\begin{cases}
  M_{1}(a) \: \text{if}\: a\in P_{1}(W),\\
  M_{2}(a) \: \text{if}\: a\in P_{2}(W),\\
  M_{3}(a) \: \text{if}\: a\in P_{3}(W)
  \end{cases} \] on  $P(\Di(W))$.
  We shall show that
  $ M $ is a $\Gamma$-matching on $P(\Di(W))$. 
  \\
  \\
   \textbf{A. $ M $ is an acyclic matching on $P(\Di(W))$}:

    We will first show that the matchings $  M_{1}$, $M_{2} $ and
    $ M_{3} $ are acyclic on $ P_{1}(W)$, $P_{2}(W) $ 
    and $ P_{3}(W) $
    respectively and then use
  Lemma \ref{L1} to show that $ M $ is an acyclic matching
  on $P(\Di(W))$.

   The matching $ M_{1} $ is acyclic. To see it:
By Lemma \ref{LL1}, if $M_{1}$ has a cycle on $P_{1}(W)$, then
$a_{0}\lessdot M_{1}(a_{0})=a_{0}\underline{s}_{n}
\gtrdot a_{1} $ and $ a_{1} \lessdot M_{1}(a_{1})=a_{1} \underline{s}_{n}$
 for some $ a_{0}\neq a_{1} $. However since 
$ \underline{s}_{n} \nleq a_{0}$ and 
$\underline{s}_{n}\nleq a_{1}$
 we have that $ a_{0}=a_{1} $, so there is no cycle. 
 Hence $M_{1}$ is acyclic.

Define a map
$g:P(\Di(W_{-3}))\rightarrow   P_{2}(W)$ by
$g(x)=x\underline{s}_{n-2}\underline{s}_{n}\underline{s}_{n-1}$. 
The subword property implies that $g$ is a poset isomorphism.  
Since $M^{-3}$  is acyclic, $M_{2}$ is an acyclic matching.

Similarly, $\Gamma(W_{-2})$  and
 $P_{3}(W)$ are isomorphic.
Since $M^{-2}$ is acyclic and $M^{-2}$ preserves $\Gamma(W_{-2})$, 
$M_{3}$  is an acyclic matching.

Our goal is to show that $ M $ is acyclic using Lemma \ref{L1}.
Let $ Q $ be the (total) order whose  Hasse diagram is as below:
	  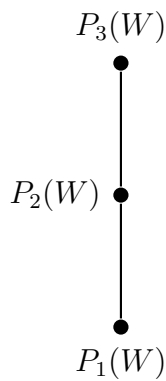
\begin{figure}[ht]
	  	\centering
	  	\begin{tikzpicture}[
	  	thick,
	  	acteur/.style={
	  		circle,
	  		fill=black,
	  		thick,
	  		inner sep=0.5pt,
	  		minimum size=0.2cm
	  	}
	  	]
	  	\node (a1) at (-1,0.75) [acteur,label=below:$P_{1}(W)$]{};
	  	\node (a2) at (-1,2.5) [acteur,label=left: $P_{2}(W) $]{};
	  	\node (a3) at (-1,4.25) [acteur,label=:$P_{3}(W)$]{};
	  	\draw (a1) -- (a2);
	  	\draw (a2) -- (a3);
	  	\end{tikzpicture}
	  	\caption{Hasse diagram of $ Q $.} \label{fi:bg(Q)}
	  \end{figure}

   We proceed to verify the hypotheses of the lemma.
	  \begin{enumerate}
	  	\item As we have already seen, each cell belongs to only one $ P_{i}(W) $ for $ i=1,2,3 $.
	  	\item We show that $ P_{1} (W)$ and $ P_{1}(W)\cup P_{2}(W) $
	  	are order ideals. To observe that $ P_{1} (W)$ is an
	  	order ideal we let $ a\in P_{1} (W)$ and $ t\leq a $. If $ \underline{s}_{n}\nleq a $,
	  	then $ \underline{s}_{n} \nleq t $ and hence $ t\in P_{1} (W)$. If $ s_{n} \in D_{R}(a)$,
	  	then either $ \underline{s}_{n}\nleq t $ or $ s_{n} \in D_{R}(t)$.
	  	So $ t\in P_{1} (W)$. Thus $ P_{1}(W) $ is an order ideal.
	  	
	  	Also $ P_{1}(W)\cup P_{2} (W)$ is an order ideal. 
	  	If $ a\in P_{2} (W)$ and $ t< a $, then we have
	  	$ t<a=x\underline{s}_{n-2}\underline{s}_{n}\underline{s}_{n-1} $.
	  	If $\underline{s}_{n-2}\underline{s}_{n}\underline{s}_{n-1}\leq  t $ then
	  	$ t\in P_{2} (W)$. If $ \underline{s}_{n-2}\underline{s}_{n}\underline{s}_{n-1}
	  	\nleq t $  then 
	  	 $  \underline{s}_{i} \nleq t $ for at least one of
	  	$ i=n-2,n-1,n $. If $\underline{s}_{n-2}\nleq t$ then $t=x\underline{s}_{n}
        \underline{s}_{n-1}=x\underline{s}_{n-1}
        \underline{s}_{n}\in P_{1}(W)$. Similarly if $\underline{s}_{n}$ or
        $\underline{s}_{n-1}\nleq t$, then $t=x\underline{s}_{n-2}
        \underline{s}_{n-1}\in P_{1}(W)$ or $t=x\underline{s}_{n-2}\underline{s}_{n}\in P_{1}(W)$.
	    Hence $ P_{1}(W)\cup P_{2} (W)$ is an order ideal.
	  \end{enumerate}
   Now since $ P_{1}(W), P_{1}(W)\cup P_{2}(W) $ and $ P_{1}(W)\cup P_{2}(W)\cup P_{3} (W)$
are order ideals, and $ M_{1}, M_{2} $ and $ M_{3} $ are acyclic matchings on
$ P_{1}(W), P_{2} (W)$, and $ P_{3} (W)$ respectively, then by Lemma \ref{L1}, $M$ is
an acyclic matching on $ P(\Di(W)) $.
\\
\textbf{B. All critical cells are of top dimension $ n-1 $}:
Let $ a $ be a critical cell of $ M $. Since $ M_{1} $ 
is complete on $ P_{1}(W) $, then either
$ a = x\underline{s}_{n-2}\underline{s}_{n}\underline{s}_{n-1} $ with $ x $ a critical cell of $ M^{-3} $
or $ a =y\underline{s}_{n}\underline{s}_{n-1} $ with $ y $ a critical cell of $ M^{-2} $. 
By the assumptions on 
$ M^{-2} $ and $M^{-3}$,  $\text{dim}(a)=n-1$ in both cases. 
\\
\textbf{C. $ M $ preserves $ \Gamma(W) $}:
	That the matching $ M $ preserves $ \Gamma(W) $ follows by construction since
	$ \Gamma(W) = P_{2}(W) \cup P_{3}(W)$.\\
	\textbf{D. All critical cells belong to $ \Gamma(W) $}: Since
	 $ M $ is complete on $ P_{1} (W)$,
	 there are no critical cells in $ P_{1} (W)$. So 
	 all critical cells of $M$ are in
	  $P(\Di(W))\backslash P_{1} (W)=\Gamma(W)$.
	
This completes the proof that $M$
is a $\Gamma$-matching on $P(\Di(W))$. 
Finally, the assertion about the number of critical cells is 
immediate from part B above.
\end{proof}	

\section{Applications of Theorem \ref{th:mai2}} \label{sec:mainn}

In this section we shall calculate the homotopy types of some explicit boolean involution complexes, 
including all those of finite Coxeter groups. 
Barring some small examples that are easily computed by hand, 
we achieve this by identifying $\Gamma$-matchings of some suitably chosen ordered Coxeter systems, 
and then invoking Theorem \ref{th:mai2}. 

\begin{Theorem}\label{Theorem}\label{th:maii1}
If $W$ is irreducible and finite, the homotopy type of $\Di(W)$ is as 
in the following table  

\begin{center}
\begin{table}[H]
\begin{tabular}{|c|c|} 
 \hline
\emph{Coxeter group $W$} & \emph{Homotopy type  of $\Di(W)$}\\ 
\hline
$A_{n}$ & $ \beta_I(A_{n})\cdot S^{n-1} $  \\ 
 $B_{n}$ &$\beta_I(B_{n})\cdot S^{n-1}$  \\ 
 $D_{n}$ &$\beta_I(D_{n})\cdot S^{n-1}$  \\
 $E_{6}$ & \emph{a point}  \\ 
 $E_{7}$ &$S^{6}$  \\ 
 $E_{8}$ &$S^{7}$  \\ 
 $F_{4}$ &\emph{a point}\\
 $H_{3}$ &$S^{2}$ \\ 
 $H_{4}$ &$S^{3}$ \\
 $I_{2}(m)$ &$S^{1}$ \\
 \hline
\end{tabular}
\vspace{1 cm}
\end{table}
\end{center}
\setlength{\tabcolsep}{-2pt}
where the Betti numbers in the classical types are determined by:
	\begin{itemize}
  	\item $\beta_I(A_{n})=\beta_I(A_{n-2})+\beta_I(A_{n-3})$ with
  	$\beta_I(A_{1})=0, \beta_I(A_{2})=0$ and $ \beta_I(A_{3})=1$,
  	\item $\beta_I(B_{n})=\beta_I(B_{n-2})+\beta_I(B_{n-3})$
   with 
  	$\beta_I(B_{2})=1, \beta_I(B_{3})=1$ and $ \beta_I(B_{4})=1$,
  	\item $\beta_I(D_{n})=\beta_I(D_{n-2})+\beta_I(D_{n-3})$  with 
  	$\beta_I(D_{4})=1, \beta_I(D_{5})=1 $
  	 and $\beta_I(D_{6})=2$.
  	\end{itemize}	
\end{Theorem}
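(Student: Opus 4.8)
The plan is to deduce every row of the table from Theorem \ref{th:mai2}, supplemented by a short list of small systems handled by hand. Note first that the recurrence in Theorem \ref{th:mai2} is precisely the order-three relation $\beta_I(Z)=\beta_I(Z_{-2})+\beta_I(Z_{-3})$ appearing in the three displayed bullet points, so the whole problem splits into two tasks: (i) realizing each relevant Coxeter system as a path extension of a path ended base whose truncations $W_{-1},W_{-2},W_{-3}$ carry $\Gamma$-matchings, and (ii) pinning down finitely many initial values. Throughout I would first apply Lemma \ref{le:r} to lower every label $\ge 4$ to the value $4$, so that only the labels $3$ and $4$ occur; this is exactly what collapses the $H$- and $I_2$-rows into the $B$-row.

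For the classical families I would order the generators so that the irregular feature sits at the bottom: the path order $s_1<\cdots<s_n$ for $A_n$; the order placing the label-$4$ edge on $\{s_1,s_2\}$ for $B_n$; and the order placing the fork on $\{s_1,s_2,s_3\}$ with the long tail running up to $s_n$ for $D_n$. Checking Definition \ref{Path} directly, $A_n$ $(n\ge 3)$, $B_n$ $(n\ge 4)$ and $D_n$ $(n\ge 5)$ are then path ended, and in each case deleting the top generator returns the previous member of the same family. Hence each family is a chain of path extensions of a small base, and Theorem \ref{th:mai2} simultaneously yields the wedge-of-spheres conclusion and the stated recurrences. The initial values $\beta_I(A_1)=\beta_I(A_2)=0$, $\beta_I(A_3)=1$, $\beta_I(B_2)=\beta_I(B_3)=1$, $\beta_I(D_4)=1$ I would read off from the small complexes, the remaining listed values then following from the recurrence.

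For the exceptional groups I would proceed as follows. By Lemma \ref{le:r}, collapsing the label $5$ to $4$ gives $\Di(H_3)\cong\Di(B_3)$, $\Di(H_4)\cong\Di(B_4)$ and $\Di(I_2(m))\cong\Di(B_2)$ for every $m\ge 4$, so these rows are immediate from the $B$-computation (and $I_2(3)=A_2$ is the contractible exception already recorded in the $A$-row). For the simply laced chain, ordering $E_n$ with its longest arm on top makes $E_7$ and $E_8$ path extensions of $E_6$, and one reads off $(E_6)_{-2}=A_4$, $(E_6)_{-3}=A_1\sqcup A_2$, $(E_7)_{-2}=D_5$, $(E_7)_{-3}=A_4$, $(E_8)_{-2}=E_6$ and $(E_8)_{-3}=D_5$. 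The recurrence then gives $\beta_I(E_6)=\beta_I(A_4)+\beta_I(A_1\sqcup A_2)=0$, $\beta_I(E_7)=\beta_I(D_5)+\beta_I(A_4)=1$ and $\beta_I(E_8)=\beta_I(E_6)+\beta_I(D_5)=1$, as required. Finally, $F_4$ has its label-$4$ edge in the middle of the path and so is path ended under no ordering; I would therefore compute $\Di(F_4)$ directly, exhibiting an acyclic matching with no critical cells, whence it is contractible.

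The whole difficulty lives in the base cases, which is where I expect the main obstacle. Two kinds of small systems escape Theorem \ref{th:main}: the genuinely non-path-ended irreducible types $F_4$, $D_4$ and $B_3$ (together with the handful of smallest systems that seed each recurrence), and the reducible truncations $A_1\sqcup A_1=D_2$ and $A_1\sqcup A_2$ that surface when peeling $D_5$ and $E_6$. For the reducible ones I would invoke the join decomposition $\Di(W\times W')\cong\Di(W)\ast\Di(W')$, which shows they are contractible and lets me build an explicit complete acyclic matching preserving $\Gamma$, i.e.\ a $\Gamma$-matching with no critical cells. The irreducible small cases reduce to finite checks, rooted in the dihedral computation that $\Di(I_2(m))$ consists of two vertices joined by one or two edges according as $m=3$ or $m\ge 4$. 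The one genuinely delicate point is $F_4$: its contractibility cannot be extracted from the recurrence and must be certified by an explicit matching, so that is the step I would expect to demand the most care.
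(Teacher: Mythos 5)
Your overall strategy is the same as the paper's: reduce types $H$ and $I_2$ to type $B$ via Lemma \ref{le:r}, realize each classical family as a chain of path extensions of a small path ended base (with the same orderings the paper uses: path order for $A$, the label-$4$ edge at the bottom for $B$, the fork at the bottom for $D$, the long arm on top for $E$), invoke Theorem \ref{th:mai2} for the wedge-of-spheres conclusion and the recurrences, and dispose of $F_4$ by an explicit acyclic matching on $\Gamma(F_4)$ with no critical cells. The seed values and the arithmetic all agree with the paper.

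There is, however, one genuine gap, concentrated in the type $E$ step. A $\Gamma$-matching is attached to an \emph{ordered} Coxeter system: $\Gamma(W)$ is defined in terms of $s=\max S$, so whether a $\Gamma$-matching exists depends on where the maximal generator sits, not just on the abstract type. When you write $(E_6)_{-2}=A_4$, $(E_7)_{-2}=D_5$ and conclude ``the recurrence then gives $\dots$ as required,'' the numerical step is harmless ($\beta_I$ is a reduced Euler characteristic, hence order-independent), but the \emph{hypotheses} of Theorem \ref{th:mai2} are not: in any path-ended order on $E_6$ the branch vertex is forced to be $s_4$, so the maximal generator of the $A_4$-shaped truncation is an interior vertex of the path, and the maximal generator of the $D_5$-shaped truncation is adjacent to the branch vertex. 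Neither ordered system is path ended, and neither coincides with the ordered $A_4$ or $D_5$ of your classical-family arguments, so their $\Gamma$-matchings do not transfer; they require separate finite verifications, which is exactly what the paper supplies (its $\Gamma(E_4)$ has four cells perfectly matched, and $\Gamma(E_5)$ is matched with a single top-dimensional critical cell; see Figures \ref{fig:E4} and \ref{fig:E5}). The same order-sensitivity already occurs at $D_3$, whose maximum is the center of the path, unlike $A_3$. Your catch-all ``smallest systems that seed each recurrence reduce to finite checks'' could in principle absorb these computations, but your explicit list of systems needing direct treatment ($F_4$, $D_4$, $B_3$, and the reducible truncations) omits them, while your $E$-paragraph cites the classical results as if they applied. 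A smaller point: for the reducible truncations, contractibility via the join decomposition does not by itself yield a complete acyclic matching (contractible complexes need not admit one); what actually saves you is that $\Gamma(A_1\sqcup A_1)=\Gamma(A_1\sqcup A_2)=\emptyset$ in the inherited orders, so the standard matching $a\mapsto a\underline{s}_n$ on $P_1$ is already a $\Gamma$-matching with no critical cells — the route the paper takes via $\Gamma(D_2)=\Gamma(E_3)=\emptyset$.
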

\begin{Remark}
Recall that the {\em Padovan sequence} $P_{0}, P_{1}, \ldots$ is determined by $P_{0}=1, P_{1}=0, P_{2}=0$ 
and $P_{n}=P_{n-2}+P_{n-3}$ for $n\geq 3$. Hence, $\beta_I(A_{n})=P_{n}$ for $n\geq 1$, 
$\beta_I(B_{n})=P_{n+3}$ for $n\geq 2$, and $\beta_I(D_{n})=P_{n+2}$ for $n\geq 2$. For more information about 
the Padovan sequence see \cite{oeisA000931} and the references cited there.
\end{Remark}
 \begin{figure}[ht]
		\includegraphics[width=\linewidth]{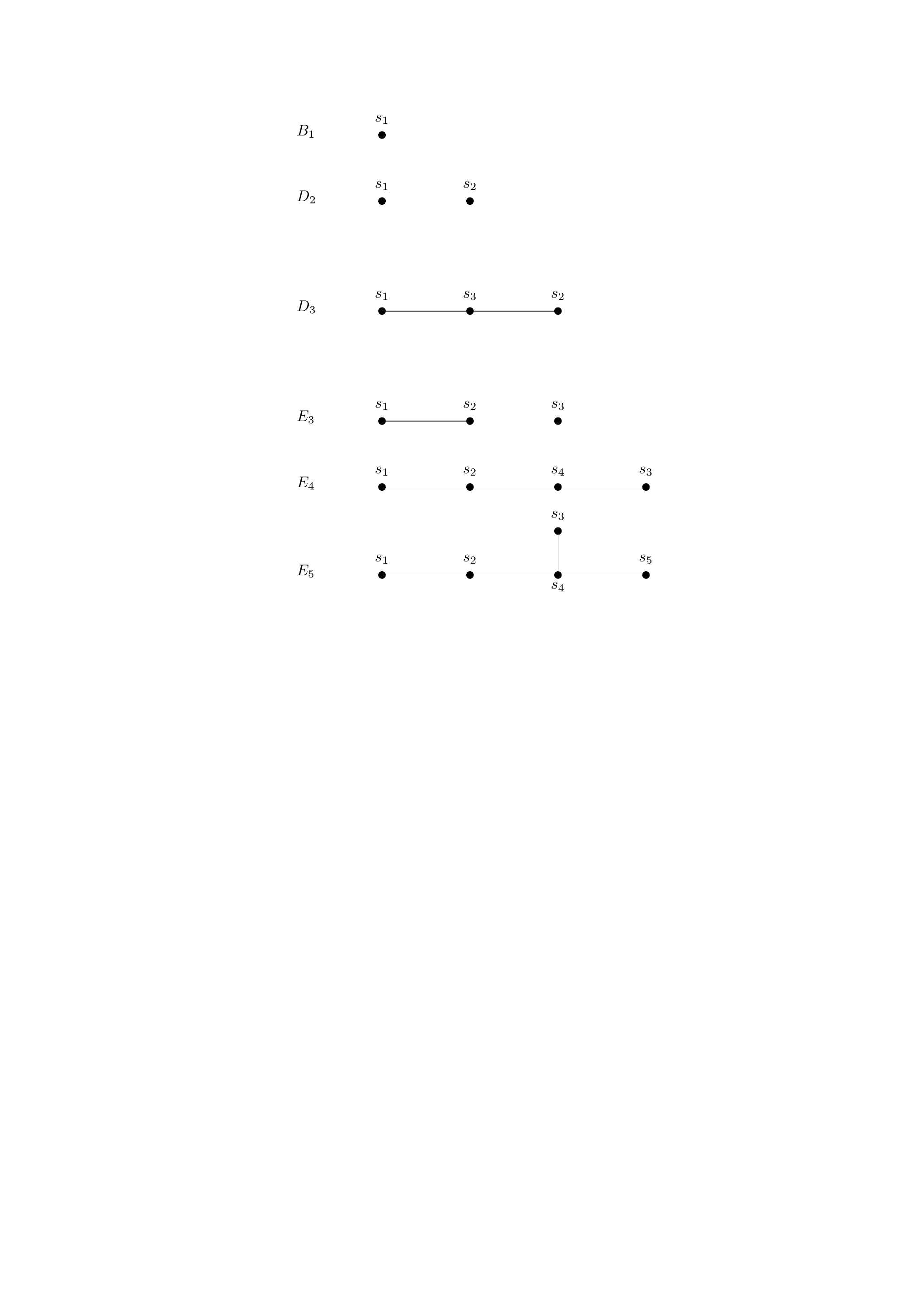}
		\vspace{-10cm}
		\caption{Ordered Coxeter systems that appear in the proof of Theorem \ref{th:maii1}}
		\label{fig:DEn}
	\end{figure}	
\setlength\belowcaptionskip{-3ex}

\begin{proof}[Proof of Theorem \ref{th:maii1}]
We provide $ \Gamma $-matchings of $ P(\Di(W))$. In types $A$, $B$, $D$, and $E$,
 this is achieved by using Theorem \ref{th:mai2}. Somewhat nonstandard type terminology 
 will be employed in order to indicate specific ordered Coxeter systems. 
 Their Coxeter graphs are collected in Figure \ref{fig:DEn}.

First, let us make a general observation. In order to verify that the boolean involution 
ideal of a certain ordered Coxeter system $(W,S)$ has a $\Gamma$-matching, it is enough 
to verify that the subposet induced by $\Gamma(W)$ admits an acyclic matching with all 
critical cells of top dimension. Namely, just as in the proof of Theorem \ref{th:main}, 
the subposet $P_1(W)$ is an order ideal with a complete acyclic matching, and so the 
union of these two matchings is a $\Gamma$-matching. 
This is illustrated for $W$ of type $F_{4}$ in Figure \ref{fig:F}. 
There, the entire acyclic matching on $P(\Di(W))$ is depicted. 
For the conclusion of the theorem, it is however enough to inspect 
$\Gamma(W)$, which consists of the two yellow elements.
\begin{figure}[ht]
		\includegraphics[width=\linewidth]{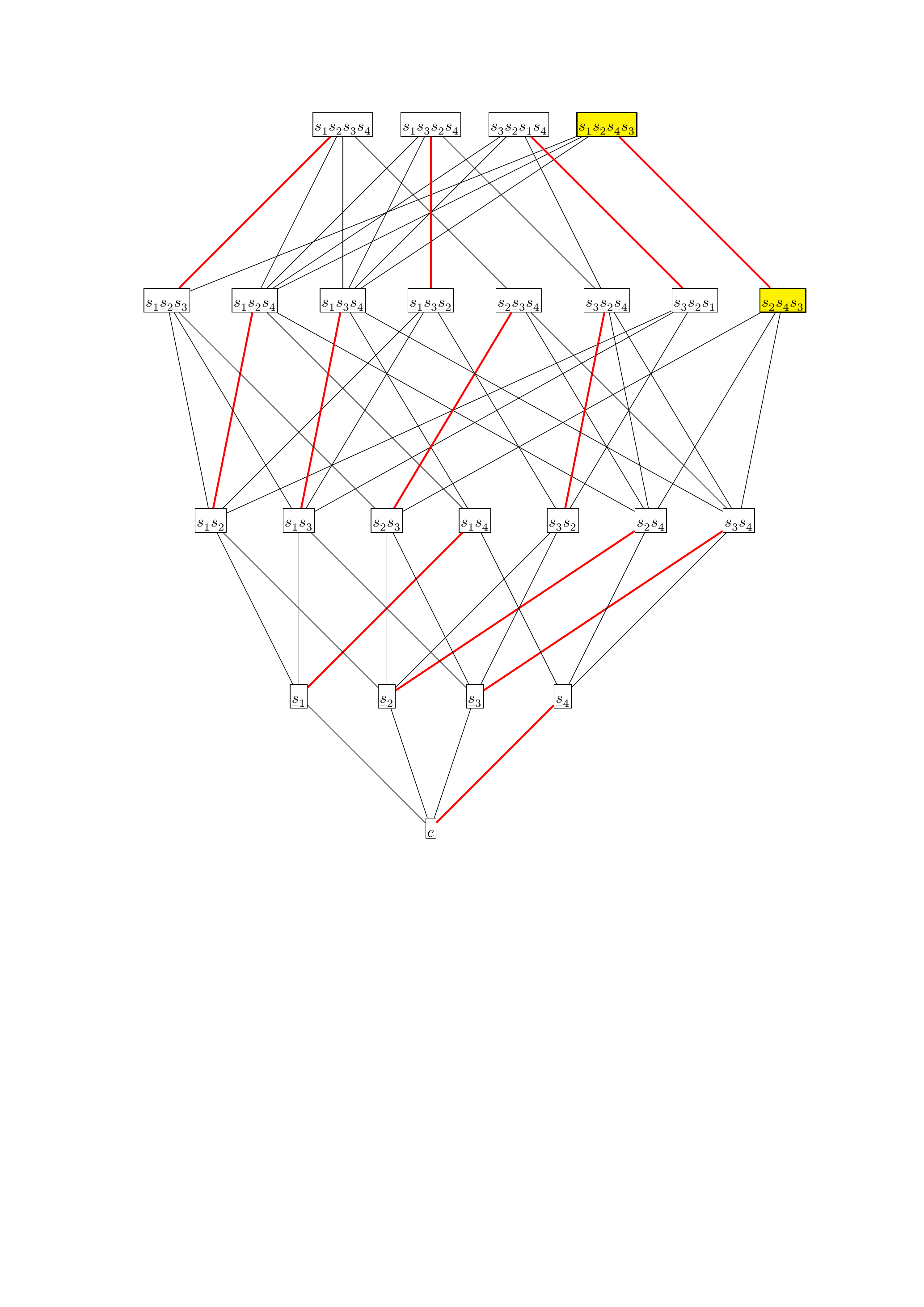}
		\vspace{-5cm}
		\caption{A $\Gamma$-matching on $P(\Di(F_{4}))$.}
		\label{fig:F}
	\end{figure}	
\setlength\belowcaptionskip{-3ex}
\newpage
\smallskip

\noindent {\bf Type A.} Consider $(W,S)$ of type $A_n$, $n \ge 4$, as an ordered Coxeter 
system with the natural order on the generators as indicated in Figure \ref{figs:d}. 
Then, $(W,S)$ is a path extension of the path ended system $A_4$. In order to obtain 
the desired result from Theorem \ref{th:mai2}, it therefore suffices to verify 
the hypotheses, namely that the boolean involution ideals of $A_{3}$, 
$A_{2}$,
 and $A_{1}$ have $\Gamma$-matchings. Concluding the proof in type $A$, 
 one readily verifies that 
 $\Gamma(A_{3}) = \{\underline{s}_1\underline{s}_3\underline{s}_2\}$ 
 and $\Gamma(A_{2}) = \Gamma(A_{1}) = \emptyset$.

\smallskip

\noindent {\bf Type B.} We proceed similarly to type $A$. This time, we begin with the 
ordered type $B$ system in Figure \ref{figs:d} which, for $n\ge 4$, is a path extension 
of $B_4$. Here, $\Gamma(B_{3}) = \{\underline{s}_1\underline{s}_3\underline{s}_2\}$, 
$\Gamma(B_{2}) = \{\underline{s}_2\underline{s}_1\}$, and $\Gamma(B_{1}) = \emptyset$.

\smallskip

\noindent {\bf Type D.} Again, we argue in a similar way. Here, we begin
 with the ordered system of type $D_{n}$ as in Figure \ref{figs:d}, where 
 $D_{n}$ is a path extension of $D_{5}$ for all $ n\geq 5$.
 The hypotheses are verified by observing that
 $\Gamma(D_{2}) = \emptyset$,
 $\Gamma(D_{3}) = \{\underline{s}_1\underline{s}_3\underline{s}_2\}$, 
 and $\Gamma(D_{4}) =\lbrace \underline{s}_1\underline{s}_4\underline{s}_3,$
 $\underline{s}_2\underline{s}_4\underline{s}_3,$
 $\underline{s}_1\underline{s}_4\underline{s}_3\underline{s}_2, $
 $\underline{s}_1\underline{s}_2\underline{s}_4\underline{s}_3,  $
 $\underline{s}_2\underline{s}_4\underline{s}_3\underline{s}_1\rbrace$. 
 Here, an acyclic matching on $\Gamma(D_{4})$ is depicted in Figure \ref{fig:D4}
 where the only critical cell is 
 $\underline{s}_2\underline{s}_4\underline{s}_3\underline{s}_1$.
 \begin{figure}[ht]
		\includegraphics[width=\linewidth]{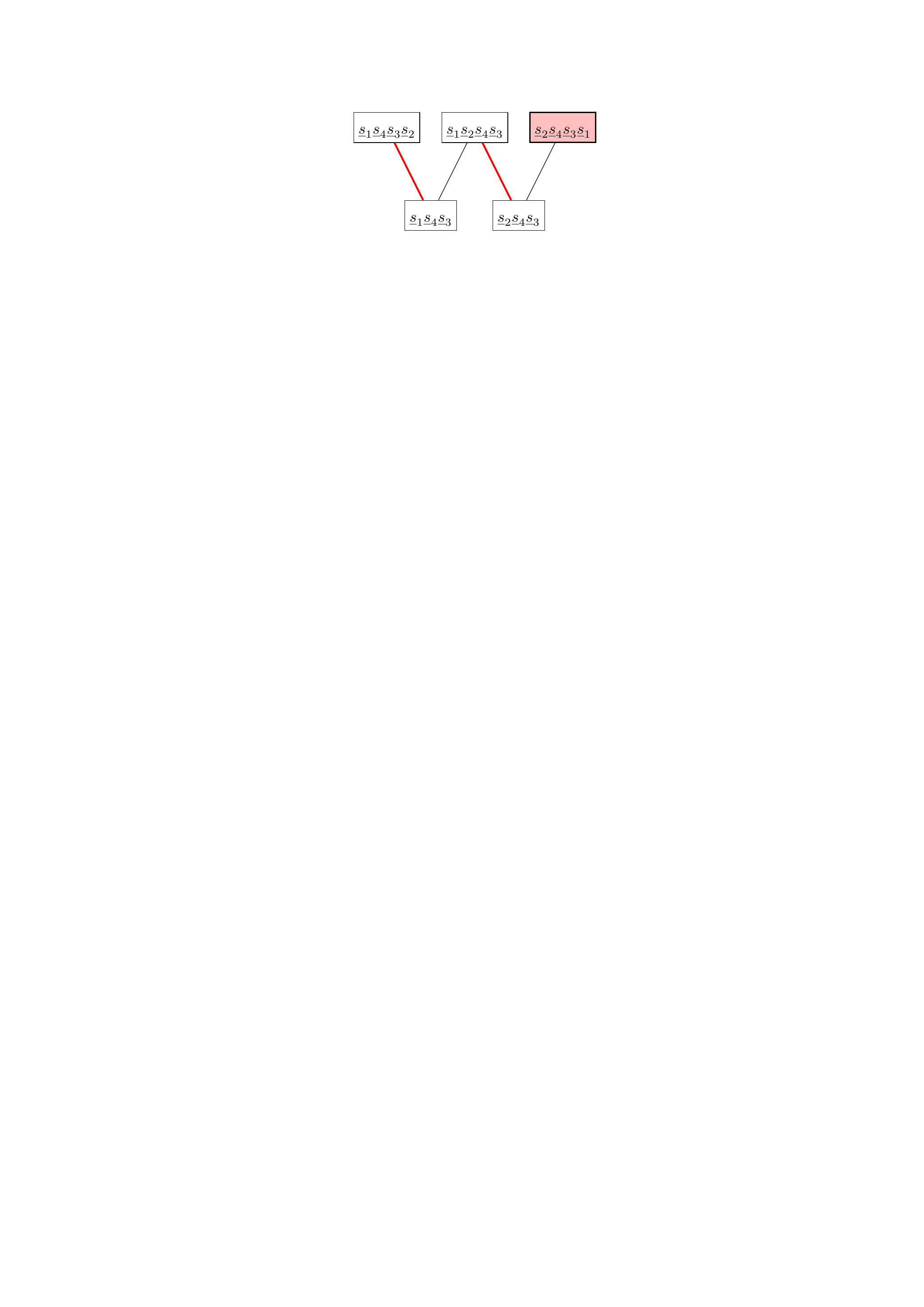}
		\vspace{-15cm}
		\caption{An acyclic matching on $\Gamma(D_{4})$.}
		\label{fig:D4}
	\end{figure}	
\setlength\belowcaptionskip{-3ex}
\smallskip

\noindent {\bf Type E.} 
We proceed in a similar way as in types $A$, $B$, $D$ above. 
We start with the ordered system of 
type $E$ as in Figure \ref{figs:d} where $E_{6}$,  $E_{7}$, and $E_{8}$ are 
path extensions of $E_{6}$.  
The hypotheses are readily checked by 
observing that $\Gamma(E_{3}) = \emptyset$,
 $\Gamma(E_{4}) = \{\underline{s}_2\underline{s}_4\underline{s}_3$, 
   $\underline{s}_1\underline{s}_4\underline{s}_2$,
   $\underline{s}_1\underline{s}_2\underline{s}_4\underline{s}_3$,
 $\underline{s}_1\underline{s}_4\underline{s}_2\underline{s}_3\}$ see Figure \ref{fig:E4}, 
 and $\Gamma(E_{5})$ is as depicted in Figure \ref{fig:E5} where an acyclic 
 matching on $\Gamma(E_{5})$ with one critical 
 cell,  
$\underline{s}_3\underline{s}_5\underline{s}_4\underline{s}_2\underline{s}_1$, is indicated. 
 \begin{figure}[ht]
 \centering
		\includegraphics[width=\linewidth]{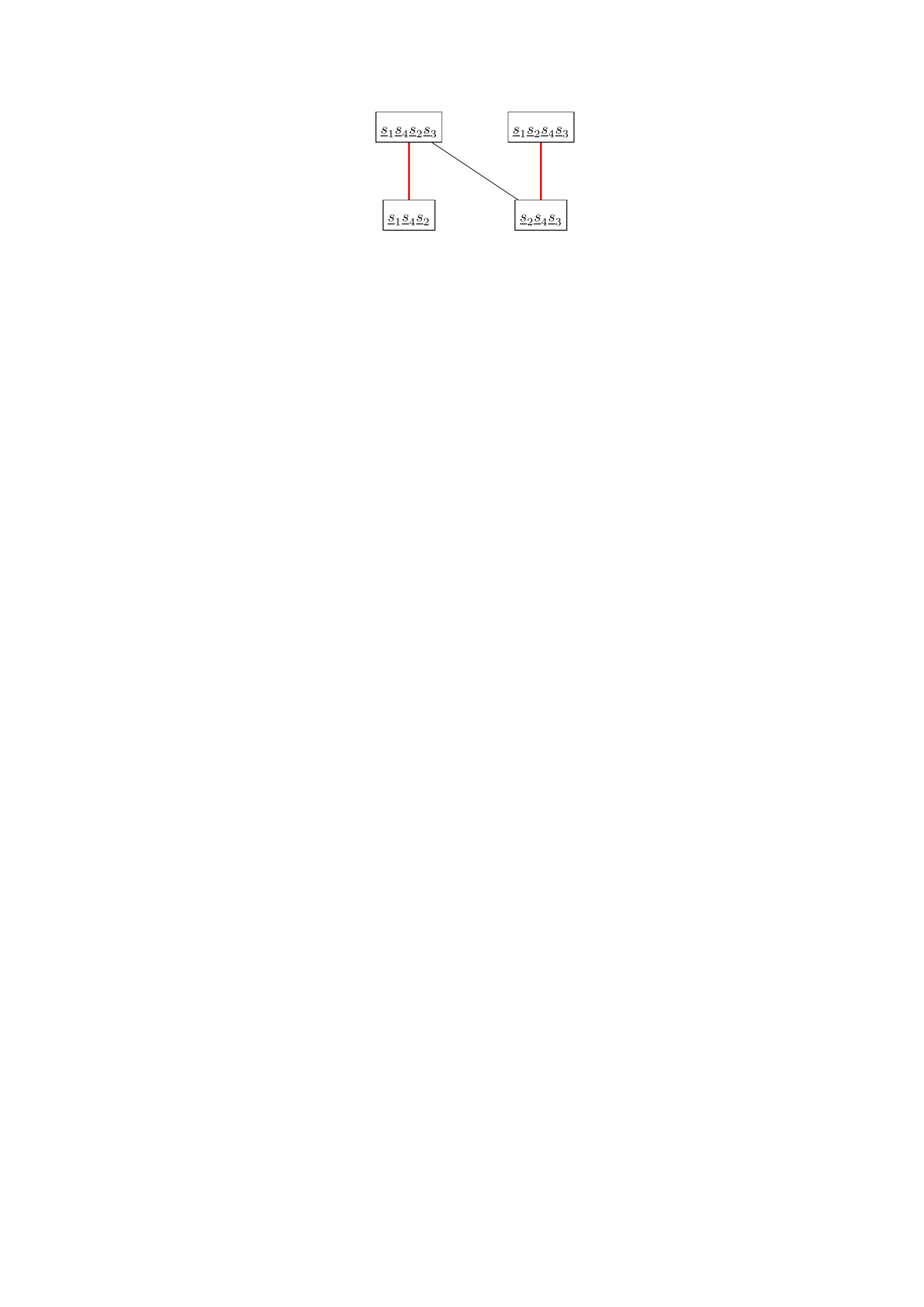}
		\vspace{-14cm}
		\caption{An acyclic matching on $\Gamma(E_{4})$.}
		\label{fig:E4}
	\end{figure}	
\setlength\belowcaptionskip{-3ex}
\newpage

\smallskip

 \begin{figure}[ht]
 \centering
		\includegraphics[width=\linewidth]{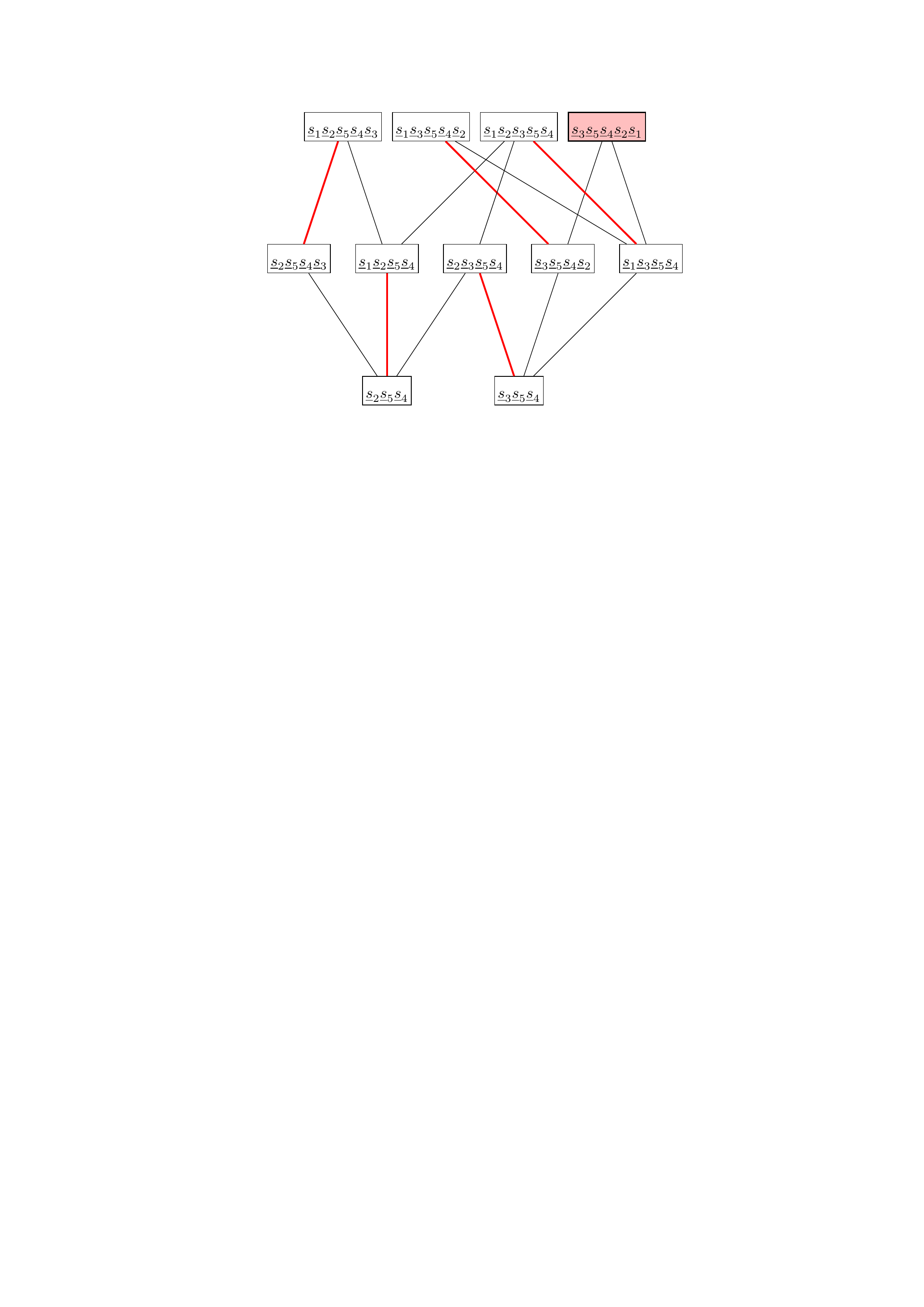}
		\vspace{-12cm}
		\caption{An acyclic matching on $\Gamma(E_{5})$.}
		\label{fig:E5}
	\end{figure}	
\setlength\belowcaptionskip{-3ex}
\newpage

\smallskip

\noindent {\bf Type H.}
The results in type $H$ are the same as in type $B$ 
by Lemma \ref{le:r}. 
\end{proof}
\newpage
Some concluding remarks are in order:

First, the previous theorem concerns irreducible groups, but reducible groups are easily covered. Namely, taking products of Coxeter systems is readily seen to imply taking the join of their corresponding boolean complexes of involutions.

Second, there is of course nothing that prevents extending the approach of the previous proof to path extensions which are not finite. For example,
$\tilde{F}_{4}$ is a path extension of $F_{4}$ and $\Di(\tilde{F}_{4})$ 
is homotopy equivalent to $S^{4}$, 
$\tilde{E}_{8}$ is a path extension of $E_{8}$ and $\Di(\tilde{E}_{8})$ 
is homotopy equivalent to $S^{8}$, etc.

Third, just as we did for type $H$ in the proof above, we may apply Lemma \ref{le:r} to obtain results for more Coxeter systems without additional effort.

Let us end with the natural question: Does the above result extend to all Coxeter systems? That is, is $\Di(W)$ homotopy equivalent to a wedge of spheres of dimension $|S|-1$ for every system $(W,S)$?

\bibliographystyle{amsplain}
\bibliography{boolean}
\end{document}